\numberwithin{equation}{section}
\newtheorem{theorem+}           {Theorem}      [section]
\newtheorem{definition+}  [theorem+]  {Definition}
\newtheorem{lemma+}  [theorem+]  {Lemma}
\newtheorem{corollary+}  [theorem+]  {Corollary}
\newtheorem{proposition+}  [theorem+]  {Proposition}
\newtheorem{example+}  [theorem+]  {Example}
\newtheorem{problem+}  [theorem+]  {Problem}
\newtheorem{remark+}  [theorem+]  {Remark}
\newenvironment{theorem}{\begin{theorem+}\sl}{\end{theorem+}\rm}
\newenvironment{theoremtx}[1]{\begin{theorem+}[#1]\sl}{\end{theorem+}\rm}
\newenvironment{lemma}{\begin{lemma+}\sl}{\end{lemma+}\rm}
\newenvironment{corollary}{\begin{corollary+}\sl}{\end{corollary+}\rm}
\newenvironment{proposition}{\begin{proposition+}\sl}{\end{proposition+}\rm}
\newenvironment{proof}{\medbreak\noindent{\bf  Proof:}\rm}{\hfill$\square$\rm}
\newenvironment{prooftx}[1]{\medbreak\noindent{\bf 
    #1:}\rm}{\hfill$\square$\rm} 
\title{{\Large \bf 
Polynomials with exponents in  compact convex sets  and 
associated weighted extremal functions -  \\
Generalized product property}}
\author{Bergur Snorrason}
\date{}
\begin{document}

\maketitle

\begin{abstract}
\noindent
A famous result of Siciak is how the Siciak-Zakharyuta functions,
    sometimes called global extremal functions or pluricomplex Green functions with a pole at infinity,
    of two sets relate to the Siciak-Zakharyuta function of their cartesian product.
In this paper Siciak's result is generalized to the setting of Siciak-Zakharyuta functions with growth given by a compact convex set,
    along with discussing why this generalization does not work in the weighted setting.

\medskip\par
\noindent{\em Subject Classification (2020)}: Primary 32U35. Secondary 32A08, 32U15.  
\end{abstract}

\section{Introduction}
Let $\mathcal{L}(\mathbb{C}^n)$ denote the Lelong class in $\mathbb{C}^n$, consisting of all $u \in \mathcal{PSH}(\mathbb{C}^n)$ such that
    $u(z) \leq \log^+|z| + c_u$, for $z \in \mathbb{C}^n$ and some constant $c_u$.
For every compact $K \subset \mathbb{C}^n$ we define the Siciak-Zakharyuta function of $K$ by
\begin{equation*}
    V_K
    =
    \sup\{u \,; u \in \mathcal{L}(\mathbb{C}^n), u|_K \leq 0\}.
\end{equation*}
Siciak proved in \cite{Sic:1981} a product formula for these functions.
Namely, if $K_j \subset \mathbb{C}^{n_j}$, for $j = 1, \dots, \ell$, $K = K_1 \times \dots \times K_\ell$, and $n = n_1 + \dots + n_\ell$, then
\begin{equation*}
    V_K(z)
    =
    \max\{V_{K_1}(z_1), \dots, V_{K_\ell}(z_\ell)\},
    \quad
    z = (z_1, \dots, z_\ell) \in \mathbb{C}^n, z_j \in \mathbb{C}^{n_j}.
\end{equation*}
See Klimek \cite{Kli:1991}, Theorem $5.1.8$.

Bos and Levenberg in \cite{BosLev:2018} continued the study of pluripotential theory related to convex sets by, among other results,
    generalizing Siciak's product formula.
To state the result we define, for $S \subset \mathbb{R}_+^n$ compact convex and containing $0$, the \emph{logarithmic supporting function} by
\begin{equation*}
    H_S(z)
    =
    \varphi_S(\operatorname{Log}(z))
\end{equation*}
for $z \in \mathbb{C}^{*n}$, where $\varphi_S(\xi) = \sup_{x \in S} \langle x, \xi \rangle$ is the supporting function of $S$
    and $\operatorname{Log}(z) = (\log|z_1|, \dots, \log|z_n|)$.
We then extend the definition of $H_S$ to $\mathbb{C}^n$ by
\begin{equation*}
    H_S(z)
    =
    \varlimsup_{\mathbb{C}^{*n} \ni w \rightarrow z} H_S(w).
\end{equation*}
This allows us to define the Lelong class $\mathcal{L}^S(\mathbb{C}^n)$ with respect to $S$ consisting of $u \in \mathcal{PSH}(\mathbb{C}^n)$
    such that $u(z) \leq H_S(z) + c_u$ for some constant $c_u$.
We also define $\mathcal{L}^S_+(\mathbb{C}^n)$ as the family of those functions $u \in \mathcal{L}^S(\mathbb{C}^n)$
    such that $H_S - c_u \leq u \leq H_S + c_u$.
For every compact $K \subset \mathbb{C}^n$ and function $q$ on $K$ we can then define the weighted Siciak-Zakharyuta function of $K$, $S$, and $q$ by
\begin{equation*}
    V^S_{K, q}
    =
    \sup\{u \,; u \in \mathcal{L}^S(\mathbb{C}^n), u|_K \leq q\}.
\end{equation*}
In the case where $q = 0$ we omit it in the subscript.
The superscript is omitted when $S$ is the standard simplex in $\mathbb{R}^n$,
    that is $S = \Sigma = \operatorname{ch}\{0, e_1, \dots, e_n\}$,
    where $e_1, \dots, e_n$ is the standard basis of $\mathbb{R}^n$ and $\operatorname{ch}$ is denotes the closed convex hull.
This is justified since $V^\Sigma_{K, 0} = V_K$.
The main result of this paper is the following:
\begin{theorem}
\label{mainresult}
Let 
    $K_j \subset \mathbb{C}^{n_j}$ be compact and non-pluripolar for $j = 1, \dots, \ell$, and
    $K = K_1 \times \dots \times K_\ell$,
    $n = n_1 + \dots + n_\ell$,
    $T \subset \mathbb{R}_+^\ell$ be compact convex,
    $S_j \subset \mathbb{R}_+^{n_j}$ be compact convex containing $0$, for $j = 1, \dots, \ell$, and
    $S \subset \mathbb{R}_+^n$ be given by
\begin{equation*}
    S
    =
    \bigcup_{x \in T} (x_1 S_1) \times \dots \times (x_\ell S_\ell).
\end{equation*}
Then
\begin{equation*}
    V^S_K(z)
    =
    \varphi_T(V^{S_1}_{K_1}(z_1), \dots, V^{S_\ell}_{K_\ell}(z_\ell)),
    \quad
    z = (z_1, \dots, z_n), z_j \in \mathbb{C}^{n_j}.
\end{equation*}
\end{theorem}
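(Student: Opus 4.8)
The proof is the combination of the two inequalities $V^S_K(z)\le\varphi_T(V^{S_1}_{K_1}(z_1),\dots,V^{S_\ell}_{K_\ell}(z_\ell))$ and $V^S_K(z)\ge\varphi_T(\dots)$, and both rest on the pointwise identity
\begin{equation*}
H_S(z)=\varphi_T\bigl(H_{S_1}(z_1),\dots,H_{S_\ell}(z_\ell)\bigr),\qquad z=(z_1,\dots,z_\ell)\in\mathbb{C}^n.
\end{equation*}
On $\mathbb{C}^{*n}$ this is immediate: for $\xi=(\xi_1,\dots,\xi_\ell)$ and $s=(x_1s_1,\dots,x_\ell s_\ell)$ with $x\in T$ and $s_j\in S_j$ one has $\langle s,\xi\rangle=\sum_j x_j\langle s_j,\xi_j\rangle$, and since $x_j\ge 0$ and $\varphi_{S_j}\ge 0$ (because $0\in S_j$) taking suprema gives $\varphi_S(\xi)=\sup_{x\in T}\sum_j x_j\varphi_{S_j}(\xi_j)=\varphi_T(\varphi_{S_1}(\xi_1),\dots,\varphi_{S_\ell}(\xi_\ell))$; passing to $\mathbb{C}^n$ uses the defining upper regularisation, the continuity and coordinatewise monotonicity of $\varphi_T$, and the continuity of each $H_{S_j}$ on $\mathbb{C}^{n_j}$. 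Along the way I record that $S$ is again compact convex with $0\in S$, that $V^{S_j}_{K_j}\ge 0$ on $\mathbb{C}^{n_j}$ and $V^{S_j}_{K_j}\equiv 0$ on $K_j$ (the constant $0$ is an admissible competitor), that $K=K_1\times\dots\times K_\ell$ is non-pluripolar, and that $\varphi_T$ is convex, coordinatewise non-decreasing, subadditive, with $\varphi_T(0)=0$.

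\emph{The inequality $\ge$.} Let $u_j\in\mathcal{L}^{S_j}(\mathbb{C}^{n_j})$ with $u_j|_{K_j}\le 0$; after replacing $u_j$ by $\max(u_j,0)$ we may take $u_j\ge 0$. Then $z\mapsto\varphi_T\bigl(u_1(z_1),\dots,u_\ell(z_\ell)\bigr)$ is plurisubharmonic (a convex, coordinatewise non-decreasing function composed with plurisubharmonic functions), it lies in $\mathcal{L}^S(\mathbb{C}^n)$ because
\begin{equation*}
\varphi_T(u_1,\dots,u_\ell)\le\varphi_T(H_{S_1}+c_1,\dots,H_{S_\ell}+c_\ell)\le\varphi_T(H_{S_1},\dots,H_{S_\ell})+\varphi_T(c_1,\dots,c_\ell)=H_S+\mathrm{const},
\end{equation*}
using monotonicity, subadditivity and the identity above, and it is $\le\varphi_T(0)=0$ on $K$. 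Hence it is a competitor in the envelope defining $V^S_K$. Taking the supremum over all admissible tuples $(u_1,\dots,u_\ell)$ and moving the supremum inside $\varphi_T$ (allowed by continuity and monotonicity of $\varphi_T$) gives $\varphi_T(V^{S_1}_{K_1}(z_1),\dots,V^{S_\ell}_{K_\ell}(z_\ell))\le V^S_K(z)$.

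\emph{The inequality $\le$.} This is the heart of the matter, and I would deduce it from the domination principle. Set $u=(V^S_K)^*$ and $v(z)=\varphi_T\bigl((V^{S_1}_{K_1})^*(z_1),\dots,(V^{S_\ell}_{K_\ell})^*(z_\ell)\bigr)$. Since the $K_j$ are non-pluripolar, $(V^{S_j}_{K_j})^*\in\mathcal{L}^{S_j}_+(\mathbb{C}^{n_j})$ and vanishes quasi-everywhere on $K_j$; the $\varphi_T$-estimates above then show that $v$ is plurisubharmonic, belongs to $\mathcal{L}^S_+(\mathbb{C}^n)$, and vanishes quasi-everywhere on $K$, whereas $u\in\mathcal{L}^S_+(\mathbb{C}^n)$ and, being the regularisation of an upper envelope, is maximal on $\mathbb{C}^n\setminus K$, so that the Monge--Ampère measure $(dd^c u)^n$ is carried by $K$. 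There $u=0$ quasi-everywhere, hence $(dd^c u)^n$-almost everywhere, and $v\ge 0=u$; the domination principle in $\mathcal{L}^S_+$ then forces $u\le v$ on $\mathbb{C}^n$, so $V^S_K\le(V^S_K)^*=u\le v$. Combined with $\ge$ this gives $\varphi_T(V^{S_1}_{K_1}(z_1),\dots)\le V^S_K(z)\le\varphi_T((V^{S_1}_{K_1})^*(z_1),\dots)$, so the identity holds off a pluripolar set. To remove the exceptional set one invokes the Siciak-type characterisation $V^S_K=\sup\{\tfrac1m\log|p|:m\ge 1,\ p\in\mathcal{P}^{mS},\ \|p\|_K\le 1\}$, with $\mathcal{P}^{mS}$ the span of the monomials $z^\alpha$, $\alpha\in mS\cap\mathbb{N}^n$: for such a $p$ the condition $\alpha/m\in S$ reads $\alpha_j/(mx_j)\in S_j$ for a common $x\in T$, which via one-block Bernstein--Walsh estimates lets one bound $\tfrac1m\log|p(z)|$ by $\varphi_T(V^{S_1}_{K_1}(z_1),\dots,V^{S_\ell}_{K_\ell}(z_\ell))$ at every point.

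The main obstacle is this second inequality together with the pluripotential-theoretic infrastructure it requires in the convex-body setting: that $(V^S_K)^*$ lies in $\mathcal{L}^S_+$ and is maximal off $K$ with equilibrium mass supported on $K$, that a domination principle is available in $\mathcal{L}^S_+$, and --- for the passage from "quasi-everywhere" to "everywhere" --- a pointwise Siciak-type theorem valid for general compact $K$. I expect the last point to be the most delicate: a naive monomial-by-monomial estimate does not suffice, since for a general (non-polydisc) $K$ one does not control $\sum_\alpha|c_\alpha|\,\|z^\alpha\|_K$ by $\|p\|_K$; this is the classical obstruction already present in Siciak's product theorem, and it is circumvented either by first reducing to regular $K_j$ or by the softer Monge--Ampère argument above.
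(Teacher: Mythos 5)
Your lower bound ($\ge$) matches the paper's argument almost verbatim: $\varphi_T$ is convex, coordinatewise non-decreasing and vanishes at the origin, so composing it with competitors for $V^{S_j}_{K_j}$ produces competitors for $V^S_K$, and $\varphi_S=\varphi_T(\varphi_{S_1},\dots,\varphi_{S_\ell})$ identifies the exponent set as $S$. That part is fine and is essentially equation (2.3) of the paper.

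For the upper bound ($\le$) you offer two routes, and both have genuine gaps in the stated generality. The domination-principle route silently needs $H_S(z)\to+\infty$ as $|z|\to\infty$, i.e.\ it needs each $S_j$ to contain a neighborhood of $0$ in $\mathbb{R}^{n_j}_+$ (and $T$ to meet $\mathbb{R}^{*\ell}_+$); otherwise $\mathcal{L}^S_+$-functions are not coercive in all variables and the comparison fails. This is precisely the gap the paper identifies in Nguyen--Long (see Proposition 2.2 and the remark after it): their argument requires $a\Sigma\subset S$ even though their statement omits it. Since the theorem here is asserted for arbitrary compact convex $S_j\ni 0$ (including, say, $S_j$ contained in a coordinate face, or $S_j=\{0\}$), your Monge--Amp\`ere fallback does not cover the claimed generality. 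The second route --- the Siciak-type characterization combined with a product Bernstein--Walsh estimate --- is the one the paper actually takes, but you only sketch it and explicitly flag its crux as unresolved. That crux is exactly the paper's Proposition 3.1: one cannot expand $p$ in monomials and estimate term by term. The paper instead builds, for each block, an $L^2(K_j+r\mathbb{D}^{n_j})$-orthonormal basis by Gram--Schmidt applied to monomials ordered by the gauge $\rho_j(\alpha)=\inf\{t\,:\,\alpha\in tS_j\}$, shows the resulting tensor products $p_\alpha=p_{1,\alpha_1}\cdots p_{\ell,\alpha_\ell}$ form an orthonormal basis of $\mathcal{P}^S$ respecting the graded structure, bounds the Fourier coefficients by Cauchy--Schwarz, applies Bernstein--Walsh to each factor, and then removes the polynomial prefactor by the $f\mapsto f^t$ trick. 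None of this appears in your proposal, and you acknowledge you do not see how to supply it. Finally, you also omit the regularization needed to legitimately invoke the Siciak--Zakharyuta theorem: it requires $S=\overline{S\cap\mathbb{Q}^n}$, which the paper arranges by thickening each $S_j$ to a convex body $S_{j,k}=\operatorname{ch}((1/k)\Sigma_j\cup S_j)$ and the compact $K$ to $K+\varepsilon\overline{\mathbb{D}}^n$, passing to limits using monotone convergence of $V^S_{K,q}$ in $S$ and $K$, and treating the degenerate case $T\cap\mathbb{R}^{*\ell}=\emptyset$ separately with a Liouville argument. In short: the overall architecture is right, but the load-bearing step (the orthonormal-basis product Bernstein--Walsh inequality) and the case analysis/regularization that make the statement hold in full generality are both missing.
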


Taking $T = \operatorname{ch}\{e_1, \dots, e_\ell\}$ and $S_j = \Sigma_j$, where $\Sigma_j$ is the standard simplex in $\mathbb{R}^{n_j}$,
    we get $S = \Sigma_n$ so this generalizes Siciak's original result, since in this case $\varphi_T(\xi) = \max\{\xi_1, \dots, \xi_\ell\}$.
This result also generalizes Theorem $1.1$ in Nguyen and Long \cite{NguTan:2021}.
They prove the following.
\begin{corollary}
Let
    $K_1 \subset \mathbb{C}^{n_1}$ and $K_2 \subset \mathbb{C}^{n_2}$ be compact and non-pluripolar,
    $K = K_1 \times K_2$,
    $S_1 \subset \mathbb{R}_+^{n_1}$ and $S_2 \subset \mathbb{R}_+^{n_2}$ be compact convex and containing
        a neighborhood of $0$ in $\mathbb{R}^{n_1}_+$ and $\mathbb{R}^{n_2}_+$, respectively, and
    $S = \operatorname{ch} ((S_1 \times \{0\}) \cup (\{0\} \times S_2))$.
Then
\begin{equation*}
    V^S_K(z)
    =
    \max\{V^{S_1}_{K_1}(z_1), V^{S_2}_{K_2}(z_2)\}.
\end{equation*}
\end{corollary}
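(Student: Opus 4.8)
The plan is to deduce the Corollary directly from Theorem~\ref{mainresult} by specializing to $\ell = 2$ and $T = \operatorname{ch}\{e_1, e_2\} \subset \mathbb{R}_+^2$, i.e.\ the segment joining $e_1$ and $e_2$. First I would check that this choice is legitimate: $K_1$ and $K_2$ are compact and non-pluripolar by hypothesis; $T$ is compact, convex, and contained in $\mathbb{R}_+^2$; and each $S_j$, containing a neighborhood of $0$ in $\mathbb{R}_+^{n_j}$, in particular contains $0$. (This is the only place the neighborhood hypothesis of the Corollary is used — it serves precisely to bring the data into the scope of Theorem~\ref{mainresult}, which merely requires $0 \in S_j$.) So Theorem~\ref{mainresult} applies with these $K_j$, $S_j$, and $T$.

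Next I would identify the set $S$ produced by Theorem~\ref{mainresult} with the set $S$ appearing in the Corollary. Writing $T = \{(t, 1-t) : t \in [0,1]\}$, the former is $\bigcup_{t \in [0,1]} (t S_1) \times ((1-t) S_2)$, and I claim this equals $\operatorname{ch}\big((S_1 \times \{0\}) \cup (\{0\} \times S_2)\big)$. The inclusion ``$\subseteq$'' is immediate, since $(ta, (1-t)b) = t\,(a,0) + (1-t)\,(0,b)$ for $a \in S_1$, $b \in S_2$. For ``$\supseteq$'', write a point of the convex hull as $\big(\sum_i \lambda_i a_i,\ \sum_j \mu_j b_j\big)$ with $a_i \in S_1$, $b_j \in S_2$, all coefficients nonnegative, and $\sum_i \lambda_i + \sum_j \mu_j = 1$; set $t = \sum_i \lambda_i$. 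If $0 < t < 1$, then $\sum_i \lambda_i a_i = t \sum_i (\lambda_i/t) a_i \in t S_1$ and similarly $\sum_j \mu_j b_j \in (1-t) S_2$ by convexity of $S_1$ and $S_2$; the cases $t = 0$ and $t = 1$ are immediate. Hence the two sets named $S$ coincide.

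Finally I would compute $\varphi_T$. A linear functional on the polytope $T = \operatorname{ch}\{e_1, e_2\}$ attains its maximum at a vertex, so $\varphi_T(\xi_1, \xi_2) = \max\{\langle e_1, \xi\rangle, \langle e_2, \xi\rangle\} = \max\{\xi_1, \xi_2\}$ for every $\xi \in \mathbb{R}^2$. Substituting this and the identification of $S$ into the conclusion of Theorem~\ref{mainresult} yields $V^S_K(z) = \max\{V^{S_1}_{K_1}(z_1), V^{S_2}_{K_2}(z_2)\}$, which is the assertion of the Corollary.

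There is no genuine obstacle here, since the Corollary is a pure specialization of the main theorem; the only step that requires a little care is the set identity in the second paragraph, and even there the content is just the elementary observation that the convex hull of $(S_1 \times \{0\}) \cup (\{0\} \times S_2)$ is swept out by the product slices $(t S_1) \times ((1-t) S_2)$, $t \in [0,1]$.
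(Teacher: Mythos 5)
Your proof is correct and follows the same route the paper intends: the Corollary is presented as an immediate specialization of Theorem~\ref{mainresult} with $\ell = 2$ and $T = \operatorname{ch}\{e_1, e_2\}$, and the paper leaves the details implicit. Your explicit verification that $\bigcup_{t\in[0,1]}(tS_1)\times((1-t)S_2) = \operatorname{ch}((S_1\times\{0\})\cup(\{0\}\times S_2))$ and that $\varphi_T = \max$ is exactly the bookkeeping the paper omits; and you are also right that the neighborhood-of-$0$ hypothesis is superfluous for this derivation (the paper includes it only to state fairly what Nguyen and Long's argument actually requires, cf.\ Proposition~\ref{bg:prop.1}).
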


In \cite{NguTan:2021} it is not assumed that $S_1$ and $S_2$ contain a neighborhood of their respective origins, but their proof requires it,
    see Proposition \ref{bg:prop.1} herein.

Another corollary of Theorem \ref{mainresult} is Proposition $2.4$ in Bos and Levenberg \cite{BosLev:2018}.
To state this result we recall the definition of a lower set.
A compact convex $S \subset \mathbb{R}^n_+$, with $0 \in S$, is said to be a \emph{lower set} if for all $s \in S$ the box
    $[0, s_1] \times \dots \times [0, s_n]$ is contained in $S$.
Theorem $5.8$ in \cite{MagSigSigSno:2023} gives several equivalent characterizations for this property.
One of these is that $S$ is a lower set if and only if $\varphi_S(\xi) = \varphi_S(\xi^+)$,
    where $\xi^+ = (\xi_1^+, \dots, \xi_n^+)$ and $\xi_j^+ = \max\{0, \xi\}$, for all $\xi \in \mathbb{R}^n$.
We refer to the smallest lower set containing $S$ as the \emph{lower hull of $S$} denoted by $\hat{S}_\ell$,
    and note that $S$ is a lower set if and only if $S = \hat{S}_\ell$.
The supporting function of $\hat{S}_\ell$ is therefore given by $\varphi_{\hat{S}_\ell}(\xi) = \varphi_S(\xi^+)$.
If we assume $\ell = n$ and $S_j = [0, 1]$ in Theorem \ref{mainresult} we have that
\begin{equation*}
    \varphi_S(\xi)
    =
    \varphi_T(\varphi_{[0, 1]}(\xi_1), \dots, \varphi_{[0, 1]}(\xi_n))
    =
    \varphi_T(\xi_1^+, \dots, \xi_n^+)
    =
    \varphi_{\hat{T}_\ell}(\xi)
\end{equation*}
since $\varphi_{[a, b]}(\xi) = \max\{a\xi, b\xi\}$.
So $S = \hat{T}_\ell$, since supporting functions uniquely determine their sets.
This is clarified further in Section \ref{sec:bg}.
So setting $\ell = n$ and $S_j = [0, 1]$ leads to the following, which is a generalization of Proposition $2.4$ in \cite{BosLev:2018}.
\begin{corollary}
Let
    $K_1, \dots, K_n \subset \mathbb{C}$ be compact and non-polar,
    $K = K_1 \times \dots \times K_n$, and
    $S \subset \mathbb{R}_+^n$ convex compact and containing $0$.
Then
\begin{equation*}
    V^{\hat{S}_\ell}_K(z)
    =
    \varphi_S(V_{K_1}(z_1), \dots, V_{K_n}(z_n)).
\end{equation*}
\end{corollary}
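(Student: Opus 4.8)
The plan is to obtain this as a direct specialization of Theorem \ref{mainresult}, taking $\ell = n$, each $n_j = 1$, $S_j = [0,1]$, and $T = S$. First I would check that the hypotheses of Theorem \ref{mainresult} are met: each $K_j \subset \mathbb{C}$ is compact, and since for subsets of $\mathbb{C}$ polarity and pluripolarity coincide, the non-polarity of $K_j$ supplies the required non-pluripolarity; moreover $[0,1] \subset \mathbb{R}_+$ is compact convex and contains $0$, and $S \subset \mathbb{R}_+^n$ is compact convex. With these choices the set manufactured by the theorem is
\begin{equation*}
    S'
    =
    \bigcup_{x \in S} (x_1[0,1]) \times \dots \times (x_n[0,1]).
\end{equation*}

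The one real point to verify is the set-equality $S' = \hat{S}_\ell$. This is precisely the computation carried out in the introduction (with $T = S$ in place): one has $\varphi_{S'}(\xi) = \varphi_S(\varphi_{[0,1]}(\xi_1), \dots, \varphi_{[0,1]}(\xi_n))$, and since $\varphi_{[0,1]}(\xi_j) = \max\{0, \xi_j\} = \xi_j^+$ this equals $\varphi_S(\xi^+) = \varphi_{\hat{S}_\ell}(\xi)$ by the characterization of lower sets recalled above. As a compact convex set is determined by its supporting function, $S' = \hat{S}_\ell$.

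Finally I would identify the factors. The standard simplex in $\mathbb{R}^1$ is $\Sigma_1 = \operatorname{ch}\{0, e_1\} = [0,1]$, so the convention $V^\Sigma_{K,0} = V_K$ gives $V^{[0,1]}_{K_j} = V_{K_j}$ for each $j$; and $\varphi_T = \varphi_S$ by the choice of $T$. Substituting into Theorem \ref{mainresult} then yields
\begin{equation*}
    V^{\hat{S}_\ell}_K(z)
    =
    V^{S'}_K(z)
    =
    \varphi_T\big(V^{[0,1]}_{K_1}(z_1), \dots, V^{[0,1]}_{K_n}(z_n)\big)
    =
    \varphi_S\big(V_{K_1}(z_1), \dots, V_{K_n}(z_n)\big),
\end{equation*}
which is the assertion. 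Since all the substance is contained in Theorem \ref{mainresult}, I do not expect any genuine obstacle here beyond the bookkeeping identity $S' = \hat{S}_\ell$ and the matching of the one-variable conventions.
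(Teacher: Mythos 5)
Your proof is correct and matches the paper's own derivation exactly: the paper obtains this corollary by specializing Theorem \ref{mainresult} to $\ell = n$, $S_j = [0,1]$, $T = S$, computing $\varphi_{[0,1]}(\xi_j) = \xi_j^+$ and invoking $\varphi_S(\xi^+) = \varphi_{\hat{S}_\ell}(\xi)$ to identify the manufactured set as $\hat{S}_\ell$. The only step you spelled out that the paper leaves implicit is the one-variable identification $V^{[0,1]}_{K_j} = V_{K_j}$, which is exactly the convention $V^{\Sigma}_{K,0} = V_K$ stated in the introduction.
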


In \cite{BosLev:2018} this is proven in the setting where $S$ is a lower set.
Then the formula becomes
\begin{equation}
\label{intro:eq.1}
    V^S_K(z)
    =
    \varphi_S(V_{K_1}(z_1), \dots, V_{K_n}(z_n)).
\end{equation}
Levenberg and Perera \cite{LevPer:2020} claim to prove that the formula also holds if we only assume that $a \Sigma \subset S$ for some $a > 0$, where
    $\Sigma$ is the standard simplex in $\mathbb{R}^n$.
Subsequently Nguyen and Long \cite{NguTan:2021} claimed it holds under the relaxed condition that $S$ is a convex body,
    that is when the interior of $S$ is not empty.
These results can not hold.
Both make the erroneous assumption that the right hand side of (\ref{intro:eq.1}) is in $\mathcal{L}^S(\mathbb{C}^n)$,
    but Theorem \ref{mainresult} tells us that it is in $\mathcal{L}^{\hat{S}_\ell}(\mathbb{C}^n)$.
We can also show that these result are wrong by an explicit counterexample.

Let $K_1 = K_2 = \overline{\mathbb{D}}$ and $K = K_1 \times K_2 = \overline{\mathbb{D}}^2$.
By Proposition $4.3$ in \cite{MagSigSigSno:2023}, we have that $V^S_K = H_S$, for every $0 \in S \subset \mathbb{R}_+^n$ compact and convex.
If we set $S = \operatorname{ch}\{(0, 0), (1, 0), (1, 1), (0, a)\}$ then $\varphi_S(\xi) = \max\{\xi_1^+, (\xi_1 + \xi_2)^+, a\xi_2^+\}$,
    for all $\xi \in \mathbb{R}^2$, and thus
\begin{equation*}
    H_S(z)
    =
    \max\{\log^+|z_1|, (\log|z_1| + \log|z_2|)^+, a\log^+|z_2|\},
\end{equation*}
for $z \in \mathbb{C}^2$.
But
\begin{equation*}
    \varphi_S(V_{\overline{\mathbb{D}}}(z_1), V_{\overline{\mathbb{D}}}(z_2))
    =
    \max\{\log^+|z_1|, \log^+|z_1| + \log^+|z_2|, a\log^+|z_2|\},
\end{equation*}
for $z \in \mathbb{C}^2$, and, for $\zeta \in \mathbb{C}$ with $|\zeta| > 1$,
\begin{equation*}
    V^S_K(\zeta^{-1}, \zeta)
    =
    H_S(\zeta^{-1}, \zeta)
    =
    a \log |\zeta|
    <
    \log |\zeta|
    =
    \varphi_S(V_{\overline{\mathbb{D}}}(\zeta), V_{\overline{\mathbb{D}}}(\zeta^{-1})),
\end{equation*}
when $a < 1$.

We get more corollaries when we have an explicit formula for $\varphi_T$.
One immediate example is taking $T = \{(1, 1)\}$, since then $\varphi_T(\xi) = \xi_1 + \xi_2$ and $S = S_1 \times S_2$.
\begin{corollary}
Let
    $K_1 \subset \mathbb{C}^{n_1}$ and $K_2 \subset \mathbb{C}^{n_2}$ be compact and non-pluripolar, and
    $S_1 \subset \mathbb{R}_+^{n_1}$ and $S_2 \subset \mathbb{R}_+^{n_2}$ be compact convex and containing $0$.
Then
\begin{equation*}
    V^{S_1 \times S_2}_{K_1 \times K_2}(z)
    =
    V^{S_1}_{K_1}(z_1) + V^{S_2}_{K_2}(z_2).
\end{equation*}
\end{corollary}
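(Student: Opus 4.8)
The plan is to obtain this as the special case $\ell = 2$, $T = \{(1,1)\}$ of Theorem \ref{mainresult}. First I would check that this choice of $T$ satisfies the hypotheses of the theorem: the singleton $\{(1,1)\}$ is a compact convex subset of $\mathbb{R}_+^2$, and the sets $S_1, S_2$ are compact convex and contain $0$ by assumption, so all hypotheses of Theorem \ref{mainresult} hold with $n_1, n_2$ as given and $n = n_1 + n_2$.

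Next I would compute the two quantities appearing in the conclusion of Theorem \ref{mainresult} for this $T$. Since $T = \{(1,1)\}$, the supporting function is
\begin{equation*}
    \varphi_T(\xi)
    =
    \sup_{x \in T} \langle x, \xi \rangle
    =
    \langle (1,1), \xi \rangle
    =
    \xi_1 + \xi_2,
    \qquad \xi \in \mathbb{R}^2,
\end{equation*}
and the set $S$ built from $T, S_1, S_2$ is
\begin{equation*}
    S
    =
    \bigcup_{x \in T} (x_1 S_1) \times (x_2 S_2)
    =
    (1\cdot S_1) \times (1\cdot S_2)
    =
    S_1 \times S_2 .
\end{equation*}
Substituting these into the conclusion of Theorem \ref{mainresult} gives exactly
\begin{equation*}
    V^{S_1 \times S_2}_{K_1 \times K_2}(z)
    =
    \varphi_T\bigl(V^{S_1}_{K_1}(z_1), V^{S_2}_{K_2}(z_2)\bigr)
    =
    V^{S_1}_{K_1}(z_1) + V^{S_2}_{K_2}(z_2),
\end{equation*}
which is the claimed identity.

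There is no real obstacle here: the corollary is a direct specialization of Theorem \ref{mainresult}, and the entire content of the proof is the bookkeeping above, namely verifying that a one-point set is admissible as the convex body $T$ and recording the resulting $\varphi_T$ and $S$. The only point worth stating explicitly is that a singleton is trivially compact and convex, so that Theorem \ref{mainresult} applies verbatim.
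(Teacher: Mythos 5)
Your proposal is correct and matches the paper's own derivation: the paper obtains this corollary precisely by taking $T = \{(1,1)\}$ in Theorem \ref{mainresult}, noting $\varphi_T(\xi) = \xi_1 + \xi_2$ and $S = S_1 \times S_2$. Your verification that a singleton is compact and convex, and the resulting substitution, is exactly the intended argument.
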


It is well known that $\varphi_T(\xi) = \|\xi^+\|_q$ if $T = B \cap \mathbb{R}_+^n$,
    where $B$ is the unit ball with respect to the norm $\| \cdot \|_p$ and $p, q \geq 1$ satisfy $1/p + 1/q = 1$,
    see the discussion of equations ($2.2.12$) in \cite{Hormander:convexity}.
\begin{corollary}
Let $n_1, \dots, n_\ell$ be natural numbers,
    $n = n_1 + \dots + n_\ell$,
    $K_j \subset \mathbb{C}^{n_j}$ be compact and non-pluripolar for $j = 1, \dots, \ell$, and
    $K = K_1 \times \dots \times K_\ell$,
    $S_j \subset \mathbb{R}_+^{n_j}$ be compact convex and containing $0$, for $j = 1, \dots, \ell$, and
    $S \subset \mathbb{R}_+^n$ be given by
\begin{equation*}
    S
    =
    \bigcup_{\substack{x \in \mathbb{R}_+^\ell \\ \|x\|_p \leq 1}}
        (x_1 S_1) \times \dots \times (x_\ell S_\ell).
\end{equation*}
Then
\begin{equation*}
    V^S_K(z)^q
    =
    \|(V^{S_1}_{K_1}(z_1), \dots, V^{S_\ell}_{K_\ell}(z_\ell))\|_q^q
    =
    V^{S_1}_{K_1}(z_1)^q + \dots + V^{S_\ell}_{K_\ell}(z_\ell)^q.
\end{equation*}
\end{corollary}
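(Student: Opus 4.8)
The plan is to derive this directly from Theorem~\ref{mainresult} by choosing $T = \{x \in \mathbb{R}_+^\ell \,; \|x\|_p \leq 1\}$, which is exactly $B \cap \mathbb{R}_+^\ell$ for $B$ the unit $\|\cdot\|_p$-ball, and is evidently compact and convex. With this $T$, the set $S$ in the corollary is precisely the set $\bigcup_{x \in T}(x_1 S_1)\times\dots\times(x_\ell S_\ell)$ appearing in Theorem~\ref{mainresult}, so the theorem applies verbatim and gives
\begin{equation*}
    V^S_K(z)
    =
    \varphi_T\bigl(V^{S_1}_{K_1}(z_1), \dots, V^{S_\ell}_{K_\ell}(z_\ell)\bigr).
\end{equation*}

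Next I would invoke the classical duality fact recalled just before the statement: for $T = B \cap \mathbb{R}_+^\ell$ with $1/p + 1/q = 1$, one has $\varphi_T(\xi) = \|\xi^+\|_q$ for all $\xi \in \mathbb{R}^\ell$ (this is the discussion around equations $(2.2.12)$ in \cite{Hormander:convexity}; the $\mathbb{R}_+^\ell$-truncation of the ball produces the $\xi \mapsto \xi^+$ inside the dual norm). Since each $V^{S_j}_{K_j} \geq 0$ — the constant function $0$ lies in $\mathcal{L}^{S_j}(\mathbb{C}^{n_j})$ and vanishes on $K_j$, so it is a competitor in the supremum defining $V^{S_j}_{K_j}$ — the vector $(V^{S_1}_{K_1}(z_1),\dots,V^{S_\ell}_{K_\ell}(z_\ell))$ already has nonnegative entries, so $\xi^+ = \xi$ for this $\xi$ and
\begin{equation*}
    V^S_K(z)
    =
    \bigl\|(V^{S_1}_{K_1}(z_1), \dots, V^{S_\ell}_{K_\ell}(z_\ell))\bigr\|_q.
\end{equation*}
Raising both sides to the $q$-th power and expanding the $\|\cdot\|_q$-norm gives the two displayed equalities in the corollary.

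The only genuine point requiring care — and the step I expect to be the main obstacle — is verifying that the $S$ defined in the corollary literally coincides with the $S$ from Theorem~\ref{mainresult} for this choice of $T$, i.e.\ that $T = \{x \in \mathbb{R}_+^\ell \,; \|x\|_p \leq 1\}$ is a legitimate compact convex subset of $\mathbb{R}_+^\ell$ (nonempty, bounded, closed, convex), which is immediate, and that the resulting $S$ is still compact convex in $\mathbb{R}_+^n$ so that $V^S_K$ is well-defined. The latter is part of the content guaranteed by Theorem~\ref{mainresult} itself under the stated hypotheses on $S_j$, so no extra work is needed; one should perhaps remark that when $p = \infty$ (so $q = 1$) we recover $T = [0,1]^\ell$ and the pure additive statement, and when $p = 1$ (so $q = \infty$) we recover $T = \operatorname{ch}\{0, e_1,\dots,e_\ell\}$ and hence Siciak's $\max$-formula, tying the corollary back to the examples already discussed after Theorem~\ref{mainresult}.
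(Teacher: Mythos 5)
Your proof is correct and matches the paper's (implicit) argument: set $T = B \cap \mathbb{R}_+^\ell$, apply Theorem~\ref{mainresult}, invoke the classical duality $\varphi_T(\xi) = \|\xi^+\|_q$ from \cite{Hormander:convexity}, and use nonnegativity of each $V^{S_j}_{K_j}$ (which, as you correctly observe, follows because $0 \in \mathcal{L}^{S_j}(\mathbb{C}^{n_j})$ and vanishes on $K_j$) to drop the $+$ before raising to the $q$-th power. The only caveat worth noting is that your closing remark about the case $p = 1$, $q = \infty$ is compatible with Siciak's $\max$-formula but not with the displayed $q$-th-power identity, which presupposes finite $q$.
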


Note that $B \cap \mathbb{R}_+^n$ is a lower set so the previous result becomes particularly explicit when $\ell = n$.
\begin{corollary}
Let
    $p, q > 1$ with $1/p + 1/q = 1$,
    $K_j \subset \mathbb{C}$ be compact and non-pluripolar for $j = 1, \dots, n$, and
    $K = K_1 \times \dots \times K_n$,
    $S \subset \mathbb{R}_+^n$ is given by $S = \{x \in \mathbb{R}_+^n\,; \|x\|_p \leq 1\}$.
Then
\begin{equation*}
    V^S_K(z)^q
    =
    V_{K_1}(z_1)^q + \dots + V_{K_n}(z_n)^q.
\end{equation*}
\end{corollary}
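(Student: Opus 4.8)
The plan is to obtain this as a direct specialization of Theorem~\ref{mainresult}. Indeed, take $\ell = n$, all $n_j = 1$, and $S_j = [0,1] \subset \mathbb{R}_+$ for each $j$. We must choose $T \subset \mathbb{R}_+^n$ so that the set $S$ of the theorem, namely $S = \bigcup_{x \in T}(x_1 S_1) \times \dots \times (x_n S_n) = \bigcup_{x \in T}[0,x_1] \times \dots \times [0,x_n]$, coincides with $\{x \in \mathbb{R}_+^n : \|x\|_p \le 1\}$. The natural candidate is $T = B \cap \mathbb{R}_+^n$ with $B$ the closed $\|\cdot\|_p$-unit ball. Since $T$ is itself a lower set (if $0 \le y \le x$ componentwise and $\|x\|_p \le 1$ then $\|y\|_p \le 1$), the box $[0,x_1]\times\dots\times[0,x_n]$ is contained in $T$ for each $x \in T$, so the union over $x \in T$ is exactly $T$ itself. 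Hence $S = T = \{x \in \mathbb{R}_+^n : \|x\|_p \le 1\}$, which matches the statement, and $T$ is compact, convex, and contains $0$ as required.

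Next I would invoke Theorem~\ref{mainresult} with these choices. It gives
\begin{equation*}
    V^S_K(z) = \varphi_T\bigl(V^{[0,1]}_{K_1}(z_1), \dots, V^{[0,1]}_{K_n}(z_n)\bigr).
\end{equation*}
Two simplifications remain. First, $V^{[0,1]}_{K_j} = V_{K_j}$: this is because for $n_j = 1$ the standard simplex $\Sigma_1$ in $\mathbb{R}$ is precisely the interval $[0,1]$, and by the convention fixed in the introduction $V^{\Sigma}_{K,0} = V_K$, so $V^{[0,1]}_{K_j}(z_j) = V_{K_j}(z_j)$. Second, I need the formula for $\varphi_T$ where $T = B \cap \mathbb{R}_+^n$ and $B$ is the $\|\cdot\|_p$-ball. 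As recalled in the excerpt (the discussion around equations (2.2.12) in \cite{Hormander:convexity}), one has $\varphi_T(\xi) = \|\xi^+\|_q$ with $1/p + 1/q = 1$. Applying this to $\xi = (V_{K_1}(z_1), \dots, V_{K_n}(z_n))$, and using that each $V_{K_j} \ge 0$ (extremal functions of compact sets are nonnegative, as $0 \in \mathcal{L}^{S_j}$ is admissible), so that $\xi = \xi^+$, yields
\begin{equation*}
    V^S_K(z) = \|(V_{K_1}(z_1), \dots, V_{K_n}(z_n))\|_q = \bigl(V_{K_1}(z_1)^q + \dots + V_{K_n}(z_n)^q\bigr)^{1/q}.
\end{equation*}
Raising to the $q$-th power gives the claimed identity $V^S_K(z)^q = V_{K_1}(z_1)^q + \dots + V_{K_n}(z_n)^q$.

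The only genuine point requiring care — and the step I expect to be the main (if modest) obstacle — is verifying that the $S$ produced by the theorem's construction really is $\{x : \|x\|_p \le 1\}$ and not merely a subset of it; this is exactly the observation that $B \cap \mathbb{R}_+^n$ is a lower set, so that the union of sub-boxes does not shrink it. One should also double-check the hypothesis $p,q > 1$ (strict) in the statement is what makes $B \cap \mathbb{R}_+^n$ have nonempty interior and be a genuine convex body, though the argument above only uses convexity, compactness, and $0 \in T$, all of which hold. Everything else is bookkeeping: matching dimensions, unwinding the $n_j = 1$ case of the weighted extremal function, and citing the standard duality $\varphi_{B_p \cap \mathbb{R}_+^n} = \|\cdot^+\|_q$. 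I would present the proof essentially as the three displays above with a sentence of justification attached to each.
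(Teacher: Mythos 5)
Your proof is correct and matches the paper's (implicit) argument: the paper obtains this corollary by specializing the preceding one to $\ell = n$, $S_j = [0,1]$, $T = B_p \cap \mathbb{R}_+^n$, using exactly the observation that $B_p \cap \mathbb{R}_+^n$ is a lower set so that the union of boxes collapses to $T$ itself, together with the H\"ormander identity $\varphi_T = \|\cdot^+\|_q$ and $V_{K_j} \ge 0$. Deriving it directly from Theorem~\ref{mainresult} rather than from the intermediate corollary is the same route with one trivial step inlined.
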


A natural question is if it is possible to generalize Theorem \ref{mainresult} to the weighted case.
The answer turns out to be negative, as is shown in Propositions \ref{weight:prop.1} and \ref{weight:prop.2}.
As a follow up, we will look into when the sublevel sets of $V^S_K$ are not convex, even if $K$ is convex.

For general results on weighted Siciak-Zakharyuta functions and 
their properties see
\cite{MagSigSigSno:2023} and the references therein.  See also 
\cite{MagSigSig:2023} and \cite{MagSigSno:2023}.

\subsection*{Acknowledgment}  
The results of this paper are a part of a research project, 
{\it Holomorphic Approximations and Pluripotential Theory},
with  project grant 
no.~207236-051 supported by the Icelandic Research Fund.
I would like to thank the Fund for its support and the Mathematics Division, Science Institute, University of Iceland, for hosting the project.
I thank my supervisors Ragnar Sigurðsson and Benedikt Steinar Magnússon for their guidance and careful reading the paper.

\section{Background}
\label{sec:bg}
This section is an overview of required fundamental results from \cite{MagSigSigSno:2023}.
We will use the notation $\mathbb{C}^* = \mathbb{C} \setminus \{0\}$ and $\mathbb{C}^{*n} = (\mathbb{C}^*)^n$.
We fix a compact convex $S \subset \mathbb{R}_+^n$ such that $0 \in S$.
Recall that we define the logarithmic supporting function of $S$ by
\begin{equation*}
    H_S(z)
    =
    \left \{
        \begin{array}{l l}
            \varphi_S(\operatorname{Log}(z)), & z \in \mathbb{C}^{*n}\\
            \varlimsup\limits_{\mathbb{C}^{*n} \ni w \rightarrow z} H_S(w), & z \not \in \mathbb{C}^{*n}\\
        \end{array}
    \right .
\end{equation*}
where $\varphi_S(\xi) = \sup_{x \in S} \langle x, \xi \rangle$ and
    $\operatorname{Log}(z) = (\log|z_1|, \dots, \log|z_n|)$.
We then define the Lelong class $\mathcal{L}^S(\mathbb{C}^n)$
    with respect to $S$ as those $u \in \mathcal{PSH}(\mathbb{C}^n)$ such that $u \leq H_S + c_u$, for some constant $c_u$.
For a function $q$ on a compact $K \subset \mathbb{C}^n$ we define the Siciak-Zakharyuta function of $K$, $q$, and $S$ by
\begin{equation*}
    V^S_{K, q}
    =
    \sup
    \{
        u \,; u \in \mathcal{L}^S(\mathbb{C}^n), u|_K \leq q
    \}.
\end{equation*}
The function $q$ is referred to as a \emph{weight} and is said to be \emph{admissible} if it is lower semicontinuous and the set
    $\{z \in K \,; q(z) < +\infty\}$ is non-pluripolar.
Let now $K \subset \mathbb{C}^n$ be compact and $q$ an admissible weight on $K$.
By Proposition $5.4$ in \cite{MagSigSigSno:2023} we have that $V^S_{K, q}$ is lower semicontinuous on $\mathbb{C}^{*n}$ and if furthermore
    $V^{S*}_{K, q} \leq q$ on $K$ then $V^{S*}_{K, q} = V^S_{K, q} \in \mathcal{L}^S(\mathbb{C}^n)$,
    and consequently, $V^S_{K, q}$ is continuous on $\mathbb{C}^{*n}$.
The assumption that $V^{S*}_{K, q} \leq q$ is not restrictive, since $V^{S*}_{K + \varepsilon \overline{\mathbb{D}}^n, q'} \leq q'$,
    where $q'$ is a continuous (and thus admissible) weight on $K + \varepsilon \overline{\mathbb{D}}^n$, and
    $V^S_{K_j, q_j} \nearrow V^S_{K, q}$ if $K_j \searrow K$ and $q_j \nearrow q$.
See Lemma $5.2$, and Propositions $5.3$ and $4.8$ in \cite{MagSigSigSno:2023}.
The Siciak-Zakharyuta functions are also continuous under decreasing sequences in $S$ under some conditions.
Namely, if $T_j \searrow S$, such that $V^{T_j*}_{K, q} \leq q$ for some $j \in \mathbb{N}$, then $V^{T_j}_{K, q} \searrow V^S_{K, q}$.
See Proposition $4.8$ in \cite{MagSigSigSno:2023}.

Fundamental to this study is that we can explicitly determine some Siciak-Zakharyuta functions.
By Proposition $4.3$ in \cite{MagSigSigSno:2023} we know that $V^S_K = H_S$ if
\begin{equation*}
    \mathbb{T}^n \subset K \subset \{z \in \mathbb{C}^n \,; H_S(z) = 0\},
\end{equation*}
where $\mathbb{T}$ is the unit circle in $\mathbb{C}$.
An example of such a $K$ is $\overline{\mathbb{D}}^n$.

Lemma $2.2$ in Nguyen and Long \cite{NguTan:2021} contains a comparison result that can be used, in certain cases,
    to characterize the Siciak-Zakharyuta functions in terms of maximality.
For the convenience of the reader we include it here, along with their proof.
\begin{lemma}
    \label{bg:lemma.1}
Let $u, v \in \mathcal{PSH}(\mathbb{C}^n)$ such that:
\begin{enumerate}
    \item[{\bf (i)}  ] $u \leq v$ on $K$,
    \item[{\bf (ii)} ] $\varliminf\limits_{|z| \rightarrow \infty} v(z) = +\infty$,
    \item[{\bf (iii)}] $\sup_{z \in \mathbb{C}^n} u(z) - v(z) < +\infty$,
    \item[{\bf (iv)} ] $v$ is maximal on $\mathbb{C}^n \setminus K$,
    \item[{\bf (v)}  ] $v$ is bounded from below on $K$,
\end{enumerate}
then $u \leq v$ on $\mathbb{C}^n$.
\end{lemma}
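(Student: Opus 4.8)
The plan is to use the maximality of $v$ on $\mathbb{C}^n \setminus K$ together with a comparison-of-subharmonic-functions argument applied to $u - v$, after suitably regularizing so that the difference is a well-defined plurisubharmonic-type object on the complement. Concretely, set $w = u - v$; by (iii), $M := \sup_{\mathbb{C}^n} w < +\infty$, and we want to show $w \le 0$, equivalently $M \le 0$. Suppose for contradiction that $M > 0$. On $K$ we have $w \le 0$ by (i), so the supremum $M$ is not attained on $K$; moreover by (ii) and (iii), $v(z) \to +\infty$ and $u(z) \le v(z) + M'$ for a fixed constant, so $w(z) = u(z) - v(z)$ need not go to $-\infty$, but we do know $\varlimsup_{|z| \to \infty} w(z) \le M$ trivially — this alone is not enough, so the first real step is to upgrade the behavior at infinity.

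First I would fix $\varepsilon > 0$ small and consider $v_\varepsilon := (1+\varepsilon) v$. Using (v) (so that $v$ is bounded below on $K$, say $v \ge -A$ there) we get $v_\varepsilon = v + \varepsilon v \ge v - \varepsilon A$ on $K$, hence $u \le v \le v_\varepsilon + \varepsilon A$ on $K$; and by (ii), $v_\varepsilon - v = \varepsilon v \to +\infty$ as $|z| \to \infty$, while $u - v \le M$, so $u - v_\varepsilon = (u - v) - \varepsilon v \to -\infty$ at infinity. Thus $u - v_\varepsilon$ is a plurisubharmonic function (difference of a psh function and a maximal psh function is not psh in general, so here is the subtlety — I would instead argue directly with $u$ against $v_\varepsilon$ via the comparison principle / maximality, not with the difference) that is bounded above by $\varepsilon A$ on $K$ and tends to $-\infty$ at infinity. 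The maximality of $v$ on $\mathbb{C}^n \setminus K$ means: for any bounded open $\Omega \Subset \mathbb{C}^n \setminus K$ and any $\varphi \in \mathcal{PSH}(\Omega)$ with $\varphi \le v$ near $\partial\Omega$, one has $\varphi \le v$ on $\Omega$. The key step is to apply this on the (unbounded, but the $-\infty$ behavior at infinity lets us exhaust by bounded domains) open set $\{u > v_\varepsilon + \varepsilon A\} \cap (\mathbb{C}^n \setminus K)$: on its boundary $u \le v_\varepsilon + \varepsilon A$, the set avoids $K$ since $u \le v_\varepsilon + \varepsilon A$ on $K$, and it is bounded since $u - v_\varepsilon \to -\infty$ at infinity; hence by maximality $u \le v_\varepsilon + \varepsilon A$ on this set, forcing it to be empty. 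Therefore $u \le (1+\varepsilon) v + \varepsilon A$ on all of $\mathbb{C}^n$, and letting $\varepsilon \to 0$ gives $u \le v$ — with the caveat that $v$ might be $+\infty$-valued or need not be finite, but where $v$ is finite this passes to the limit, and where $v = +\infty$ the inequality is trivial.

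I expect the main obstacle to be the rigorous handling of maximality on the unbounded complement and the regularity needed to invoke it: one must exhaust $\mathbb{C}^n \setminus K$ by bounded pieces, control the boundary values of $u$ against $v_\varepsilon$ on those pieces (using that $u - v_\varepsilon \to -\infty$), and make sure the set $\{u > (1+\varepsilon)v + \varepsilon A\}$ is relatively compact and bounded away from $K$ — this is exactly where hypotheses (ii), (iii), and (v) are each used, and omitting any one breaks the argument. A secondary technical point is that $v$ may not be locally bounded, so "maximal" should be interpreted in the generalized sense (e.g., $v = \lim$ of decreasing locally bounded maximal functions, or via the Monge–Ampère operator vanishing); in the intended applications $v$ is continuous on $\mathbb{C}^{*n}$, which sidesteps this. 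I would present the argument cleanly by first reducing (via $v_\varepsilon$) to the case where the competitor tends to $-\infty$ at infinity, then quoting the domination-by-maximal-function principle on the resulting bounded sublevel set.
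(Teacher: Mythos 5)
Your proposal is correct and follows essentially the same route as the paper: compare $u$ against a small dilate of $v$ (your $(1+\varepsilon)v + \varepsilon A$ is an affine variant of the paper's $\lambda v$ after normalizing $v \ge 0$ on $K$ via (v)), verify the inequality on $K$ and outside a large ball using (i), (ii), (iii), then use maximality (iv) to propagate it to the bounded region in between, and finally let $\varepsilon \to 0$. The only cosmetic difference is that the paper applies the domination principle directly on the bounded open set $B(0,R)\setminus K$ rather than reasoning about the possibly non-open set $\{u > (1+\varepsilon)v + \varepsilon A\}$, which tidies the technical point you flagged at the end.
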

\begin{proof}
By {\bf(v)} we may assume that $v \geq 0$ on $K$.
We now fix $\lambda > 1$ and note that, by {\bf(iii)}, there exists a constant $C$ such that $u \leq v + C$.
By {\bf (ii)} and the upper semicontinuity of $v$ we can take $R > 0$ such that $v(z) > C(\lambda - 1)^{-1}$,
    for $z \in U_R = \{z \in \mathbb{C}^n \,; |z| \geq R\}$.
We then get
\begin{equation*}
    u(z)
    \leq
    v(z) + C
    =
    \lambda v(z) + (1 - \lambda) v(z) + C
    \leq
    \lambda v(z)
\end{equation*}
for $z \in U_R$. By the positivity of $v$ on $K$ and {\bf (i)} we have that $u \leq \lambda v$ on $K \cup U_R$.
Note that, by {\bf (iv)}, $v$ is maximal on $\mathbb{C}^n \setminus (K \cup U_R)$ so $u \leq \lambda v$ on $\mathbb{C}^n$.
This holds for all $\lambda > 1$ so we conclude that $u \leq v$.
\end{proof}
\begin{proposition}
\label{bg:prop.1}
Let $S \subset \mathbb{R}_+^n$ be compact and convex with $0 \in S$ such that $S$ contains a neighborhood of $0$ in $\mathbb{R}^n_+$,
    $K \subset \mathbb{C}^n$ compact,
    $q$ an admissible weight on $K$, and
    $v \in \mathcal{L}^S_+(\mathbb{C}^n)$.
    If $V^{S*}_{K, q} \leq v \leq q$ on $K$ and $v$ is maximal on $\mathbb{C}^n \setminus K$ then $v = V^{S*}_{K, q}$ on $\mathbb{C}^n$.
\end{proposition}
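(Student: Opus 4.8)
The plan is to prove the two inequalities $v \le V^{S*}_{K,q}$ and $V^{S*}_{K,q} \le v$ on $\mathbb{C}^n$ separately. The first is essentially a tautology: since $v \in \mathcal{L}^S_+(\mathbb{C}^n) \subset \mathcal{L}^S(\mathbb{C}^n)$ and $v \le q$ on $K$, the function $v$ is one of the competitors in the supremum defining $V^S_{K,q}$, so $v \le V^S_{K,q} \le V^{S*}_{K,q}$ everywhere. The bulk of the work is the reverse inequality, for which I would invoke Lemma \ref{bg:lemma.1} with $u = V^{S*}_{K,q}$ and the given $v$.

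First I would record that the chain $V^{S*}_{K,q} \le v \le q$ on $K$ gives in particular $V^{S*}_{K,q} \le q$ on $K$, so by the results recalled from \cite{MagSigSigSno:2023} in Section \ref{sec:bg} we have $V^{S*}_{K,q} = V^S_{K,q} \in \mathcal{L}^S(\mathbb{C}^n)$; hence there is a constant $c$ with $V^{S*}_{K,q} \le H_S + c$ on $\mathbb{C}^n$. Then I would verify the five hypotheses of Lemma \ref{bg:lemma.1}. Hypothesis (i) is precisely the assumption $V^{S*}_{K,q} \le v$ on $K$. For (iii), since $v \in \mathcal{L}^S_+(\mathbb{C}^n)$ we have $v \ge H_S - c_v$, whence $V^{S*}_{K,q} - v \le (H_S + c) - (H_S - c_v) = c + c_v < +\infty$. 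For (v), and for the lower bound needed in (ii), note that $0 \in S$ forces $\varphi_S \ge 0$, hence $H_S \ge 0$ on all of $\mathbb{C}^n$, so $v \ge -c_v$ is bounded below. Hypothesis (iv) is the maximality assumption on $v$.

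The only place the stronger hypothesis on $S$ enters is (ii). Here I would use that $S$ contains a neighbourhood of $0$ in $\mathbb{R}_+^n$ to choose $\delta > 0$ with $[0,\delta]^n \subset S$; then $\varphi_S(\xi) \ge \varphi_{[0,\delta]^n}(\xi) = \delta\sum_{j=1}^n \xi_j^+$, so $H_S(z) \ge \delta\sum_{j=1}^n \log^+|z_j| \ge \delta\log^+(\max_j |z_j|)$ first on $\mathbb{C}^{*n}$ and then on all of $\mathbb{C}^n$ by taking the $\varlimsup$, the right-hand side being continuous. Since $\max_j|z_j| \to \infty$ as $|z| \to \infty$, and $v \ge H_S - c_v$, we get $\varliminf_{|z|\to\infty} v(z) = +\infty$. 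With all five hypotheses checked, Lemma \ref{bg:lemma.1} yields $V^{S*}_{K,q} \le v$ on $\mathbb{C}^n$, and combining with the easy inequality gives $v = V^{S*}_{K,q}$.

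The main obstacle is really just bookkeeping rather than a deep point: one must notice that condition (iii) needs $V^{S*}_{K,q} \in \mathcal{L}^S(\mathbb{C}^n)$, which is where the full chain $V^{S*}_{K,q} \le v \le q$ on $K$ is used (and not merely $V^{S*}_{K,q} \le v$), and one must isolate (ii) as the single place where "contains a neighbourhood of $0$" cannot be relaxed to "contains $0$": if $S$ were contained in a coordinate hyperplane, $H_S$, and hence $v$, could remain bounded along a sequence escaping to infinity, so hypothesis (ii) of Lemma \ref{bg:lemma.1} would fail and the comparison argument would break down — which is exactly the phenomenon behind the counterexample in the introduction.
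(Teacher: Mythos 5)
Your proof is correct and follows essentially the same route as the paper: the easy inequality $v \le V^{S*}_{K,q}$ from $v$ being a competitor in the supremum, and the reverse via Lemma \ref{bg:lemma.1} with $u = V^{S*}_{K,q}$, with the neighborhood-of-$0$ hypothesis on $S$ used to secure the growth condition (ii) and the lower bound (v). You are a bit more explicit than the paper in checking hypothesis (iii), observing that it needs $V^{S*}_{K,q} \in \mathcal{L}^S(\mathbb{C}^n)$ and hence the full chain $V^{S*}_{K,q}\le v\le q$ on $K$, but this is a refinement of detail rather than a different argument.
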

\begin{proof}
Since $v \in \mathcal{L}^S_+(\mathbb{C}^n)$ and $v \leq q$ on $K$ it is clear that $v \leq V^{S*}_{K, q}$.
By assumption $S$ contains a neighborhood of $0$ in $\mathbb{R}^n_+$, that is there exists $a > 0$ such that $a\Sigma \subset S$, so
\begin{equation*}
    v(z)
    \geq
    H_S(z) + C
    \geq
    a \log^+ \|z\|_\infty + C,
\end{equation*}
so $\displaystyle\varliminf_{|z| \rightarrow \infty} v(z) = +\infty$ and $v$ is bounded below on $K$.
So $v$ and $u = V^{S*}_{K, q}$ satisfy the conditions of Lemma \ref{bg:lemma.1} and thus $V^{S*}_{K, q} \leq v$.
\end{proof}

In \cite{NguTan:2021} Nguyen and Long do not include the assumption that $S$ contains a neighborhood of $0$ in $\mathbb{R}^n_+$.
So the proof of their Theorem $1.1$ is incomplete.

Central to the proof of our main result is the Siciak-Zakharyuta theorem.
The version we require can be found in \cite{MagSigSig:2023}, as Theorem $1.1$, and is restated here for the convenience of the reader.
The Siciak-Zakharyuta theorem relates $V^S_{K, q}$ to an extremal function given by polynomials.
This function, the Siciak extremal function, is defined by
\begin{equation*}
    \Phi^S_{K, q}(z)
    =
    \varlimsup_{m \rightarrow \infty} \sup\{|p(z)|^{1/m}\,; p \in \mathcal{P}^S_m(\mathbb{C}^n), p e^{-mq}|_K \leq 1\},
\end{equation*}
where $\mathcal{P}^S_m(\mathbb{C}^n)$ consists of all polynomials $p$ of the form
    $p(z) = \sum\limits_{\alpha \in (mS) \cap \mathbb{N}^n} c_\alpha z^\alpha$.
\begin{theoremtx}{\cite{MagSigSig:2023}, Theorem $1.1$}
\label{bg:thm.2}
Let
    $S \subset \mathbb{R}_+^n$ be compact convex and containing $0$,
    $K \subset \mathbb{C}^n$ be compact, and
    $q$ an admissible weight on $K$.
Then
\begin{equation*}
    V^S_{K, q}(z)
    =
    \log \Phi^S_{K, q}(z),
\end{equation*}
for all $z \in \mathbb{C}^{*n}$, if and only if $S = \overline{S \cap \mathbb{Q}^n}$.
\end{theoremtx}

Let us now turn our attention to the generalization of the product property.
Recall that if $A \subset \mathbb{R}^n$ is compact and convex then its supporting function
    $\varphi_A(\xi) = \sup_{x \in A} \langle x, \xi \rangle$ is positively homogeneous and convex,
    that is $\varphi_A(t \xi) = t \varphi_A(\xi)$ and $\varphi_A(\xi + \eta) \leq \varphi_A(\xi) + \varphi_A(\eta)$, for
    $t \in \mathbb{R}_+$ and $\eta, \xi \in \mathbb{R}^n$.
Furthermore, if $\varphi \colon \mathbb{R}^n \rightarrow \mathbb{R}$ is positively homogeneous and convex
    then it is the supporting function of precisely one compact convex set.
See Theorem $2.2.8$ in \cite{Hormander:convexity}.
Let $0 \in S_j \subset \mathbb{R}^{n_j}$ be compact convex for $j = 1, \dots, \ell$ and
    $T \subset \mathbb{R}_+^\ell$ compact convex.
Note that we do not assume that $0 \in T$.
By this assumption $\varphi_T$ is increasing in each variable, so if $u_j \in \mathcal{L}^{S_j}(\mathbb{C}^n)$ for $j = 1, \dots, \ell$, then
\begin{equation*}
    u(z)
    =
    \varphi_T(u_1(z_1), \dots, u_\ell(z_\ell))
    \leq
    \varphi_T(H_{S_1}(z_1), \dots, H_{S_\ell}(z_\ell)) + \varphi_T(c),
\end{equation*}
for $c \in \mathbb{R}^\ell$.
The convexity of $\varphi_T$ implies that $u \in \mathcal{PSH}(\mathbb{C}^n)$.
This leads us to define $\varphi \colon \mathbb{R}^n \rightarrow \mathbb{R}$ by
\begin{equation*}
    \varphi(\xi)
    =
    \varphi_T(\varphi_{S_1}(\xi_1), \dots, \varphi_{S_\ell}(\xi_\ell)),
\end{equation*}
where $\xi_j \in \mathbb{R}^{n_j}$, for $j = 1, \dots, \ell$.
Note that $\varphi$ is positively homogeneous and convex so it is the supporting function of some compact convex $S \subset \mathbb{R}^n$.
We have thus shown that $u \in \mathcal{L}^S(\mathbb{C}^n)$ and, since $\varphi_T(0) = 0$,
\begin{equation}
\label{bg:eq.1}
    \varphi_T(V^{S_1}_{K_1}(z_1), \dots, V^{S_\ell}_{K_\ell}(z_\ell))
    \leq
    V^S_K(z),
\end{equation}
for $z \in \mathbb{C}^n$, where $K = K_1 \times \dots \times K_\ell$.

To determine an explicit formula for $S$ we set
\begin{equation*}
    \widetilde{S}_j
    =
    \{0_1\} \times \dots \times \{0_{j - 1}\} \times S_j \times \{0_{j + 1}\} \times \dots \times \{0_\ell\},
\end{equation*}
where $0_j$ is the origin of $\mathbb{R}^{n_j}$.
For $\xi = (\xi_1, \dots, \xi_\ell) \in \mathbb{R}^n$ we have $\varphi_{\widetilde{S}_j}(\xi) = \varphi_{S_j}(\xi_j)$, for $j = 1, \dots, \ell$, so
$
    \varphi_S
    =
    \varphi_T(\varphi_{\widetilde{S}_1}, \dots, \varphi_{\widetilde{S}_\ell}).
    $
For compact convex sets $A$ and $B$ and $a > 0$, we have that $a\varphi_A + \varphi_B = \varphi_{aA + B}$, and
    if $(A_\alpha)_{\alpha \in I}$, $I \neq \emptyset$, is a family of compact sets, $A = \operatorname{ch} \cup_{\alpha \in I} A_\alpha$, and
\begin{equation*}
    \varphi(\xi)
    =
    \sup_{\alpha \in I}
        \varphi_{A_\alpha}(\xi)
\end{equation*}
is bounded for every $\xi$, then $\varphi$ is the supporting function of $A$.
We therefore have
\begin{equation*}
    \varphi_S(\xi)
    =
    \sup_{x \in T}
    (
        x_1 \varphi_{\widetilde{S}_1}(\xi)
        + \dots +
        x_\ell \varphi_{\widetilde{S}_\ell}(\xi)
    )
    =
    \sup_{x \in T}
    (
        \varphi_{x_1 \widetilde{S}_1 + \dots + x_\ell \widetilde{S}_\ell}(\xi)
    )
\end{equation*}
so $S = \operatorname{ch}\cup_{x \in T} (x_1 \widetilde{S}_1 + \dots + x_\ell \widetilde{S}_\ell)$.
Actually, the union is convex:

\begin{lemma}
Let $A\subset \mathbb{R}^\ell_+$ and $B_1,\dots,B_\ell\subset \mathbb{R}^n$ be convex sets.
Then
\begin{equation*}
    C
    =
    \bigcup_{a \in A} a_1 B_1 + \dots + a_\ell B_\ell
\end{equation*}
is a convex subset of $\mathbb{R}^n$.
If $A, B_1, \dots, B_\ell$ are compact, then $C$ is compact.
\end{lemma}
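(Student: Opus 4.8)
The plan is to prove convexity of $C$ directly from the definition, using the convexity of $A$ and of each $B_j$, and the fact that $A \subset \mathbb{R}^\ell_+$ so that all the scalars $a_j$ are nonnegative. Take two points $p, p' \in C$. By definition there are $a, a' \in A$ and $b_j \in B_j$, $b_j' \in B_j$ with $p = \sum_j a_j b_j$ and $p' = \sum_j a_j' b_j'$. Fix $t \in [0,1]$ and set $a'' = t a + (1-t) a' \in A$ (using convexity of $A$). The goal is to write $t p + (1-t) p' = \sum_j a_j'' b_j''$ for suitable $b_j'' \in B_j$. The natural candidate is the weighted average
\begin{equation*}
    b_j''
    =
    \frac{t a_j}{a_j''} b_j + \frac{(1-t) a_j'}{a_j''} b_j',
\end{equation*}
which is a convex combination of $b_j$ and $b_j'$ (the two coefficients are nonnegative since $a, a' \in \mathbb{R}^\ell_+$ and $t \in [0,1]$, and they sum to $1$), hence lies in $B_j$ by convexity of $B_j$; and then $a_j'' b_j'' = t a_j b_j + (1-t) a_j' b_j'$, so summing over $j$ gives exactly $t p + (1-t) p' \in C$.

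The one subtlety is the case $a_j'' = 0$, where the displayed formula divides by zero. Since $a_j'' = t a_j + (1-t) a_j'$ is a sum of nonnegative terms, $a_j'' = 0$ forces $t a_j = 0$ and $(1-t) a_j' = 0$, so the $j$-th contributions $t a_j b_j$ and $(1-t) a_j' b_j'$ to $t p + (1-t) p'$ both vanish; we may then simply pick any $b_j'' \in B_j$ (note $B_j \neq \emptyset$ since $b_j \in B_j$) and the identity $a_j'' b_j'' = 0 = t a_j b_j + (1-t) a_j' b_j'$ still holds. Handling this degenerate index separately — or, equivalently, interpreting the convex-combination coefficients as an arbitrary valid choice when the denominator vanishes — is really the only place any care is needed; I expect this to be the "main obstacle," though it is a minor one.

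For the second assertion, suppose $A, B_1, \dots, B_\ell$ are compact. Consider the map
\begin{equation*}
    F \colon A \times B_1 \times \dots \times B_\ell \to \mathbb{R}^n,
    \qquad
    F(a, b_1, \dots, b_\ell) = a_1 b_1 + \dots + a_\ell b_\ell,
\end{equation*}
which is continuous. Its domain is a product of compact sets, hence compact, and $C = F(A \times B_1 \times \dots \times B_\ell)$ is the continuous image of a compact set, hence compact. (Nonemptiness of $C$ is immediate once $A$ and all $B_j$ are nonempty, which is implicit in the statement; if one wishes to allow empty sets the claim is vacuous or trivial.) This completes the proof.
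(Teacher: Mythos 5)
Your proof is correct and takes essentially the same approach as the paper's: combine the scalars via convexity of $A$, regroup term by term, and use convexity of $B_j$ to absorb each pair into $a_j'' B_j$. The paper phrases this as the one-line inclusion $(1-t)x_j w_j + t y_j z_j \in ((1-t)x_j + t y_j)B_j$ and dispatches compactness as ``a simple sequence argument''; you simply unpack those two steps, including the harmless $a_j''=0$ case.
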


\begin{proof}  Let $c_1 = x_1 w_1 + \dots + x_\ell w_\ell,
  c_2 = y_1 z_1 + \dots + y_\ell z_\ell \in C$ and $t \in [0,1]$,
where $x = (x_1, \dots, x_\ell)$, $y = (y_1, \dots, y_\ell) \in A$ and $w_j, z_j \in B_j$
for $j = 1, \dots, \ell$.
Then
\begin{equation*}
    (1 - t)c_1 + tc_2
    =
    (1 - t)x_1 w_1 + ty_1 z_1 + \dots + (1 - t)x_\ell w_\ell + ty_\ell z_\ell.
\end{equation*}
Since $B_j$ is convex we have $(1 - t)x_j w_j + ty_j z_j \in ((1 - t)x_j + ty_j)B_j$, so for $a = (1 - t)x + ty$ we have
$(1 - t)c_1 + tc_2 \in a_1B_1 + \dots + a_\ell B_\ell \subset  C$.
The last statement follows by a simple sequence argument.
\end{proof}

Finally note that $\varphi_S = \varphi_T(\varphi_{S_1}, \dots, \varphi_{S_\ell}) \geq 0$.
It is well known that the supporting function of a compact convex set is positive if and only if the set contains the origin.

So, in conclusion, $S \subset \mathbb{R}_+^n$ is compact and convex, contains $0$, is given by
\begin{equation}
\label{eq:2.7}
    S
    =
    \bigcup_{x \in T} x_1 \widetilde{S}_1 + \dots + x_\ell \widetilde{S}_\ell
    =
    \bigcup_{x \in T} (x_1 S_1) \times \dots \times (x_\ell S_\ell),
\end{equation}
has the supporting function
\begin{equation}
\label{bg:eq.15}
    \varphi_S(\xi)
    =
    \varphi_T(\varphi_{S_1}(\xi_1), \dots, \varphi_{S_\ell}(\xi_\ell)),
    \quad
    \xi = (\xi_1, \dots, \xi_\ell), \xi_j \in \mathbb{R}^{n_j},
\end{equation}
and the logarithmic supporting function
\begin{equation*}
    H_S(z)
    =
    \varphi_T(H_{S_1}(z_1), \dots, H_{S_\ell}(z_\ell)),
    \quad
    z = (z_1, \dots, z_\ell), z_j \in \mathbb{C}^{n_j}.
\end{equation*}

\section{Proof of the main result}
We will prove Theorem \ref{mainresult} by applying the Siciak-Zakharyuta theorem along with a product variant of the Bernstein-Walsh inequality.
Recall that the Bernstein-Walsh inequality states that
\begin{equation*}
    |f(z)|
    \leq
    \|f\|_{L^\infty(K)} e^{mV^S_K(z)},
\end{equation*}
for
    $z \in \mathbb{C}^n$,
    $0 \in S \subset \mathbb{R}^n_+$ compact and convex,
    $K \subset \mathbb{C}^n$ non-pluripolar compact, and
    $f \in \mathcal{P}^S_m(\mathbb{C}^n)$.
This is a consequence of Theorem $3.6$ in \cite{MagSigSigSno:2023},
    namely that $f$, holomorphic on $\mathbb{C}^n$, is in $\mathcal{P}^S_m(\mathbb{C}^n)$
        if and only if $\log |f|^{1/m} \in \mathcal{L}^S(\mathbb{C}^n)$.

The proof of the product variant of the Bernstein-Walsh inequality follows Klimek's proof of Theorem $5.1.8$ in \cite{Kli:1991}
    and Siciak's proof of Proposition $5.9$ in \cite{Sic:1981},
    in a similar way as the proof of Bos and Levenberg \cite{BosLev:2018}.
\begin{proposition}
\label{main:prop.1}
Let 
    $K_j \subset \mathbb{C}^{n_j}$ be compact and non-pluripolar for $j = 1, \dots, \ell$, and
    $K = K_1 \times \dots \times K_\ell$,
    $n = n_1 + \dots + n_\ell$,
    $T \subset \mathbb{R}_+^\ell$ be compact convex,
    $S_j \subset \mathbb{R}_+^{n_j}$ be compact convex and containing $0$, for $j = 1, \dots, \ell$,
    $S \subset \mathbb{R}_+^n$ be given by (\ref{eq:2.7}), and
    $f \in \mathcal{P}^S_m(\mathbb{C}^n)$.
Then
\begin{equation*}
    |f(z)|
    \leq
    \|f\|_{L^\infty(K)}
        e^{m\varphi_T(V^{S_1}_{K_1}(z_1), \dots, V^{S_\ell}_{K_\ell}(z_\ell))}
\end{equation*}
for $z \in \mathbb{C}^n$.
\end{proposition}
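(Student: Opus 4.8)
The strategy is to reduce the multi-variable Bernstein–Walsh estimate to the one-variable version (the case $\ell = 1$, already stated in the excerpt as a consequence of Theorem~$3.6$ of \cite{MagSigSigSno:2023}) by slicing, following the scheme of Klimek's proof of Theorem~$5.1.8$ in \cite{Kli:1991}. First I would fix $f \in \mathcal{P}^S_m(\mathbb{C}^n)$ and reduce to the two-factor case $\ell = 2$ by induction on $\ell$; the inductive step requires knowing that the tail factors $K_2 \times \dots \times K_\ell$ carry the set $S' = \bigcup_{x' \in T'} (x_2 S_2) \times \dots$ for an appropriate $T'$, but this is slightly delicate because $T$ is not a product, so it may be cleaner to run the slicing argument directly for general $\ell$.

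The core step is the following. Fix $w = (w_1, \dots, w_\ell)$ with each $w_j \in \mathbb{C}^{*n_j}$ and each $V^{S_j}_{K_j}(w_j)$ finite. For $x \in T$, consider the "directional" exponent set $R_x = x_1\widetilde S_1 + \dots + x_\ell \widetilde S_\ell$; since $S = \bigcup_{x\in T} R_x$, the monomials of $f$ have exponents in $(mS)\cap\mathbb{N}^n$, but the decomposition is not a clean sum over a single $x$. The right move, as in Siciak and Klimek, is to bound $|f|$ on the "polytorus through $w$" first in the $z_1$ variable, treating the rest as parameters: for fixed $(z_2,\dots,z_\ell)$, the function $z_1 \mapsto f(z_1, z_2, \dots, z_\ell)$ is a polynomial whose exponents in $z_1$ lie in $(m\,\varphi_T\text{-scaled } S_1)$-type set — more precisely, writing $f = \sum_\beta z_1^\beta g_\beta(z_2,\dots,z_\ell)$, the one-variable Bernstein–Walsh inequality on $K_1$ with the set $S_1$ gives control provided $\beta \in mS_1'$ for the relevant projected set $S_1'$. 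Applying the one-variable estimate in $z_1$, then in $z_2$, etc., and tracking how the exponent constraint $\alpha \in mS$ with $S = \bigcup_{x\in T}(x_1S_1)\times\dots$ forces $\beta^{(j)} \in m x_j S_j$ for a common $x \in T$, one accumulates a factor $e^{m \sum_j x_j V^{S_j}_{K_j}(w_j)}$; taking the supremum over $x \in T$ (which is exactly $\varphi_T$ evaluated at the vector of $V^{S_j}_{K_j}(w_j)$) yields the claim on $\mathbb{C}^{*n}$. One then extends to all of $\mathbb{C}^n$ by upper semicontinuity, using that both sides are plurisubharmonic-type and $H_S$ was defined by the $\varlimsup$ over $\mathbb{C}^{*n}$.

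The main obstacle is handling the exponent bookkeeping cleanly: a monomial $z^\alpha$ appearing in $f$ has $\alpha \in (mS)\cap\mathbb{N}^n$, and $\alpha \in mS$ means $\alpha \in m(x_1S_1)\times\dots\times m(x_\ell S_\ell)$ for \emph{some} $x \in T$ depending on $\alpha$, so grouping monomials by a single $x$ is not literally possible. The way around this — and the technical heart of the proof — is to note that for each fixed $x \in T$ the linear functional $\alpha \mapsto \langle \alpha, \xi\rangle$ is maximized over $mS$ at $m\varphi_S(\xi) = m\varphi_T(\varphi_{S_1}(\xi_1),\dots)$, so rather than splitting $f$, one estimates directly: $|f(w)| \le \|f\|_{L^\infty(K)} \exp(m\sup_{\alpha\in mS\cap\mathbb{N}^n}\langle \alpha/m, \operatorname{Log} w\rangle)$-type bound is \emph{false} in general (that would only use $H_S$), so instead one must genuinely iterate the one-dimensional estimate slice by slice, at each stage replacing $\|\cdot\|_{L^\infty}$ over the full product by $\|\cdot\|_{L^\infty}$ over the remaining factors and picking up one factor $e^{m t_j V^{S_j}_{K_j}(w_j)}$ where the $t_j$'s are constrained to lie in the $j$-th projection of $mS$, consistently, i.e.\ to come from a common $x\in T$. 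Making this consistency precise — that the $z_1$-exponents, after fixing that the remaining exponents realize a point of $mS_2\times\dots$, are forced into $mx_1S_1$ for that same $x$ — is where one must be careful, and it is precisely the structural content of formula (\ref{eq:2.7}); once that is nailed down, summing the geometric-type series over the (polynomially many) monomials and taking $m$-th roots in the $\varlimsup$ is routine.
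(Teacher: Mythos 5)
Your proposal correctly identifies the central difficulty---that monomials of $f$ with exponent $\alpha \in mS$ each come with their own $x_\alpha \in T$, so one cannot group them by a single $x$---but it does not resolve it, and the proposed iterated-slicing scheme cannot resolve it as stated. If you slice in $z_1$ by writing $f(z_1,z') = \sum_\beta z_1^\beta g_\beta(z')$, the only thing you can say about the $z_1$-exponents appearing is that $\beta \in m\,t_1 S_1$ where $t_1 = \varphi_T(e_1) = \sup\{x_1 : x\in T\}$ is the projection bound. Applying the single-factor Bernstein--Walsh inequality with $t_1 S_1$ and iterating over $j = 1,\dots,\ell$ therefore yields the exponent $m(t_1 V^{S_1}_{K_1}(z_1) + \dots + t_\ell V^{S_\ell}_{K_\ell}(z_\ell))$, which is $m\,\varphi_{\{(t_1,\dots,t_\ell)\}}$ evaluated at the vector of extremal functions---the ``bounding box corner'' of $T$, not $\varphi_T$. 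Since $(t_1,\dots,t_\ell)$ need not lie in $T$, this overshoots the claimed bound, and the overshoot is exactly the gap you flag at the end (``Making this consistency precise \dots is where one must be careful'') without closing it. Fixing a $\gamma$ among the later exponents does not rescue the argument either: the set of admissible $\beta$ is $\bigcup\{mx_1 S_1 : x\in T,\ \gamma_j \in mx_j S_j\}$, still a union over several $x$, not a single $mx_1 S_1$.

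The paper's proof takes a genuinely different route that supplies the missing mechanism. For each factor it builds an orthonormal basis $(p_{j,\alpha_j})$ of $\mathcal{P}^{S_j}(\mathbb{C}^{n_j})$ in $L^2(G_j)$, with $G_j = K_j + r\mathbb{D}^{n_j}$, by applying Gram--Schmidt to monomials ordered by $\rho_j(\alpha_j) = \inf\{t : \alpha_j \in tS_j\}$; this ordering guarantees $p_{j,\alpha_j}\in\mathcal{P}^{\rho_j(\alpha_j)S_j}_1(\mathbb{C}^{n_j})$. The products $p_\alpha = p_{1,\alpha_1}\cdots p_{\ell,\alpha_\ell}$ then form an orthonormal basis of $\mathcal{P}^S(\mathbb{C}^n)$ in $L^2(\Omega_r)$, and---crucially---each $p_\alpha$ with $\alpha\in mS$ comes with \emph{its own} $x\in T$ such that $p_{j,\alpha_j}\in\mathcal{P}^{x_j S_j}_m$. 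Expanding $f = \sum_\alpha c_\alpha p_\alpha$, bounding $|c_\alpha|$ by Cauchy--Schwarz, bounding each $|p_\alpha(z)|$ by the submean value property on $K_j$ plus the single-factor Bernstein--Walsh inequality applied with $x_j S_j$, and finally taking $\sup_{x\in T}$ inside the exponent produces the $\varphi_T$ bound term by term. Polynomial factors are removed by the standard $f\mapsto f^t$ trick, and $r\to 0$ recovers $\|f\|_{L^\infty(K)}$. This orthonormal-basis decomposition is precisely what lets each term carry a single $x\in T$, which the slicing argument cannot achieve; without it, your proof has a genuine gap.
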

\begin{proof}
We will assume that $\overline{S_j \cap \mathbb{Q}^{n_j}} \neq \{0\}$ for $j = 1, \dots, \ell$.
If this were not the case then $f(z)$, $z \in \mathbb{C}^n$, would not depend on $z_j \in \mathbb{C}^{n_j}$.
We fix an $r > 0$ and let $G_j = K_j + r\mathbb{D}^{n_j}$ for $j = 1, \dots, \ell$ and $\Omega_r = G_1 \times \dots \times G_\ell$.

We now need to define an orthonormal basis for $\mathcal{P}^{S_j}(\mathbb{C}^{n_j})$ for $j = 1, \dots, \ell$, as subspaces of $L^2(G_j)$.
To do this we define $\rho_j \colon \mathbb{R}_+S_j \cap \mathbb{N}^{n_j} \rightarrow \mathbb{R}_+$ by
\begin{equation*}
    \rho_j(\alpha_j)
    =
    \inf \{t \in \mathbb{R}\,; \alpha_j \in tS_j\}
\end{equation*}
    and let $\kappa_j \colon \mathbb{N} \rightarrow \mathbb{R}_+S_j \cap \mathbb{N}^{n_j}$ be a bijection such that
$\rho_j(\kappa_j(k)) \leq \rho_j(\kappa_j(k + 1))$ for all $k \in \mathbb{N}$.
We then set $e_k = z^{\kappa_j(k)}$,
    apply the Gram-Schmidt process to $(e_k)_{k \in \mathbb{N}}$ to get $(\hat{e}_k)_{k \in \mathbb{N}}$, and
    define $p_{j, \alpha_j} = \hat{e}_{\kappa_j^{-1}(\alpha_j)}$, for all $\alpha_j \in \mathbb{R}_+S_j \cap \mathbb{N}^{n_j}$.
    This construction implies that if $z_j^{\alpha_j} = \sum_k c_{j, k} p_{j, \beta_k}(z_j)$
    then $p_{j, \beta_k} \in \mathcal{P}^{\rho_j(\alpha_j)S_j}_1(\mathbb{C}^{n_j})$ for all $k \in \mathbb{N}$ such that $c_{j, k} \neq 0$ and
    $z_j \in \mathbb{C}^{n_j}$.

Now we define $p_\alpha = p_{1, \alpha_1} \dots p_{\ell, \alpha_\ell}$
    for $\alpha = (\alpha_1, \dots, \alpha_\ell) \in \mathbb{R}_+S \cap \mathbb{N}^n$,
    where $\alpha_j \in \mathbb{N}^{n_j}$ for $j = 1, \dots, \ell$.
We now need to show that $\{p_\alpha\,; \alpha \in \mathbb{R}_+S \cap \mathbb{N}^n\}$ is a basis for $\mathcal{P}^S(\mathbb{C}^n)$.
First we show that $p_\alpha \in \mathcal{P}^S(\mathbb{C}^n)$ for all $\alpha \in \mathbb{R}_+S \cap \mathbb{N}^n$,
    and then we show that the span of $\{p_\alpha\,; \alpha \in \mathbb{R}_+S \cap \mathbb{N}^n\}$ is $\mathcal{P}^S(\mathbb{C}^n)$.

Let $m \in \mathbb{N}$ and $\alpha = (\alpha_1, \dots, \alpha_\ell) \in mS \cap \mathbb{N}^n$,
    where $\alpha_j \in \mathbb{R}_+^{n_j}$ for $j = 1, \dots, \ell$.
By the definition of $S$ there exists $x \in T$ such that $\alpha_j \in m x_j S_j$ for $j = 1, \dots, \ell$.
By construction $p_{j, \alpha_j} \in \mathcal{P}^{x_j S_j}_m(\mathbb{C}^{n_j})$ for $j = 1, \dots, \ell$,
    so for some $C > 0$, and all $z \in \mathbb{C}^n$, 
\begin{equation*}
    |p_\alpha(z)|
    =
    |p_{1, \alpha_1}(z_1)| \dots |p_{\ell, \alpha_\ell}(z_\ell)|
    \leq
    C e^{mH_{x_1 S_1}(z_1)} \cdots e^{mH_{x_\ell S_\ell }(z_\ell )} 
\end{equation*}
\begin{equation*}
    =
    C e^{m(x_1 H_{S_1}(z_1) + \dots + x_\ell H_{S_\ell}(z_\ell))} 
    \leq
    C e^{m \varphi_T(H_{S_1}(z_1), \dots, H_{S_\ell}(z_\ell))}
    =
    C e^{m H_S(z)},
\end{equation*}
so we infer $p_\alpha \in \mathcal{P}^S_m(\mathbb{C}^n)$.

To show that $\{p_\alpha\,; \alpha \in \mathbb{R}_+S \cap \mathbb{N}^n\}$ spans $\mathcal{P}^S(\mathbb{C}^n)$ it is sufficient to
    show that $z^\alpha$ belongs to the span for all $\alpha \in \mathbb{R}_+S \cap \mathbb{N}^n$.
We let $m \in \mathbb{N}$ and $\alpha = (\alpha_1, \dots, \alpha_\ell) \in mS \cap \mathbb{N}^n$,
    where $\alpha_j \in \mathbb{R}_+^{n_j}$.
Then there exists $x \in T$ such that $\alpha_j \in m x_j S_j$ for $j = 1, \dots, \ell$.
For $j = 1, \dots, \ell$ we let $\beta_{j, k} \in \mathbb{R}_+S_j \cap \mathbb{N}^{n_j}$ such that
\begin{equation*}
    z_j^{\alpha_j}
    =
    \sum_{k = 1}^{d_j} c_{j, k} p_{j, \beta_{j, k}}(z_j),
\end{equation*}
where $c_{j, k} \in \mathbb{C}^*$, for $k = 1, \dots, d_j$.
By construction of the bases we have, for $j = 1, \dots, \ell$, that $\beta_{j, k} \in m x_j S_j$ for $k = 1, \dots, d_j$.
This then implies that $(\beta_{1, k_1}, \dots, \beta_{j, k_j}) \in mS$ for $k_j = 1, \dots, d_j$ and $j = 1, \dots, \ell$,
    and thus that $p_{1, \beta_{1, k_1}} \dots p_{\ell, \beta_{\ell, k_\ell}} \in \{p_\alpha\,; \alpha \in \mathbb{R}_+S \cap \mathbb{N}^n\}$.
We also have that
\begin{equation*}
    z^\alpha
    =
    \sum_{k_1 = 1}^{d_1} \dots \sum_{k_j = 1}^{d_j}
        c'_{k_1, \dots, k_\ell} p_{1, \beta_{1, k_1}}(z_1) \dots p_{\ell, \beta_{\ell, k_\ell}}(z_\ell),
\end{equation*}
where $c'_{k_1, \dots, k_\ell} = c_{1, k_1} \dots c_{\ell, k_\ell}$.
So $\{p_\alpha\,; \alpha \in \mathbb{R}_+S \cap \mathbb{N}^n\}$ spans $\mathcal{P}^S(\mathbb{C}^n)$.
We note as well that $\{p_\alpha\,; \alpha \in \mathbb{R}_+S \cap \mathbb{N}^n\}$ is an orthonormal basis for $\mathcal{P}^S(\mathbb{C}^n)$
    as a subspace of $L^2(\Omega_r)$.

We can now write
\begin{equation}
\label{eq:5.8}
    f(z)
    =
    \sum_{\alpha \in mS \cap \mathbb{N}^n}
        c_\alpha p_\alpha(z),
\end{equation}
for $z \in \mathbb{C}^n$, where $c_\alpha = \langle f, p_\alpha \rangle$.
By the Cauchy-Schwarz inequality $|c_\alpha| \leq C_r \|f\|_{L^\infty(\Omega_r)}$,
    where $C_r = \operatorname{Vol}(\Omega_r)^{1/2}$.
Let us now fix $z = (z_1, \dots, z_\ell) \in \mathbb{C}^n$, where $z_j \in \mathbb{C}^{n_j}$, for $j = 1, \dots, \ell$.
If $\alpha = (\alpha_1, \dots, \alpha_\ell)$, where $\alpha_j \in m x_j S_j$ for $j = 1, \dots, \ell$ and $x \in T$, 
    we get, by the submean value property (in each variable), that
\begin{equation*}
    |p_{j, \alpha_j}(a_j)|^2
    \leq
    (\pi r^2)^{-n_j}
        \int_{r\mathbb{D}^{n_j} + a_j} |p_{j, \alpha_j}(\zeta)|^2 d\lambda(\zeta)
    \leq
    (\pi r^2)^{-n_j},
\end{equation*}
for $a_j \in K_j$ and $j = 1, \dots, \ell$.
By the Bernstein-Walsh inequality 
\begin{equation*}
    |p_\alpha(z)|
    \leq
    (\pi r^2)^{-n/2}
        e^{mV^{x_1 S_1}_{K_1}(z_1)}
        \cdots
        e^{mV^{x_\ell S_\ell}_{K_\ell}(z_\ell)}
    =
    \pi^{-n/2} r^{-n}
        e^{m(x_1 V^{S_1}_{K_1}(z_1) + \dots + x_\ell V^{S_\ell}_{K_\ell}(z_\ell))}
\end{equation*}
\begin{equation*}
    \leq
    \pi^{-n/2} r^{-n}
        e^{m\varphi_T(V^{S_1}_{K_1}(z_1), \dots, V^{S_\ell}_{K_\ell}(z_\ell))}.
\end{equation*}
Since $mS \subset m \sigma_S\Sigma \subset [0, m \sigma_S]^n$,
    the number of terms in the sum in equation (\ref{eq:5.8}) is no greater than
    $(m \sigma_S + 1)^n$ where $\sigma_S = \varphi_S(1, \dots, 1)$.
So
\begin{equation}
\label{eq:5.10}
    |f(z)|
    \leq
    C_r \|f\|_{L^\infty(\Omega_r)}
        \pi^{-n/2} r^{-n}
            e^{m\varphi_T(V^{S_1}_{K_1}(z_1), \dots, V^{S_\ell}_{K_\ell}(z_\ell))}
        (m \sigma_S + 1)^n.
\end{equation}
Applying the above inequality on $f^t \in \mathcal{P}^S_{mt}(\mathbb{C}^n)$, for $t \in \mathbb{N}$, we get
\begin{equation*}
    |f(z)|^t
    \leq
    C_r \|f\|_{L^\infty(\Omega_r)}^t
        \pi^{-n/2} r^{-n}
            e^{mt \varphi_T(V^{S_1}_{K_1}(z_1), \dots, V^{S_\ell}_{K_\ell}(z_\ell))}
        (mt \sigma_S + 1)^n.
\end{equation*}
Taking the $t$-th root improves the estimate in (\ref{eq:5.10}) to
\begin{equation*}
    |f(z)|
    \leq
    \|f\|_{L^\infty(\Omega_r)}
            e^{m \varphi_T(V^{S_1}_{K_1}(z_1), \dots, V^{S_\ell}_{K_\ell}(z_\ell))}
        \left (
            C_r
            \pi^{-n/2} r^{-n}
            (mt \sigma_S + 1)^n
        \right )^{1/t}.
\end{equation*}
We can now take the limit as $t$ goes to infinity and then as $r$ goes to zero to get
\begin{equation*}
    |f(z)|
    \leq
        \|f\|_{L^\infty(K)} e^{m \varphi_T(V^{S_1}_{K_1}(z_1), \dots, V^{S_\ell}_{K_\ell}(z_\ell))}.
\end{equation*}
\end{proof}

Theorem \ref{mainresult} now follows from the Siciak-Zakharyuta Theorem \ref{bg:thm.2},
    along with some regularization arguments discussed in the introduction, which we will now restate.
Take $K \subset \mathbb{C}^n$ non-pluripolar and $0 \in S \subset \mathbb{R}_+^n$ compact and convex.
Setting $K_\varepsilon = K + \varepsilon \overline{\mathbb{D}}^n$ we have that $V^{S*}_{K_\varepsilon} \leq 0$ on $K$ and 
    $V^S_{K_\varepsilon} \nearrow V^S_K$ when $\varepsilon \searrow 0$,
    see Propositions $4.8$ and $5.3$ in \cite{MagSigSigSno:2023}.
If $T_j \subset \mathbb{R}_+^n$ is a decreasing sequence of convex compact sets containing $0$ then
    $V^{T_j}_K \searrow V^S_K$, when $j \rightarrow \infty$, if $V^{T_k*}_K \leq 0$ on $K$, for some $k \in \mathbb{N}$,
    see Proposition $4.8$ in \cite{MagSigSigSno:2023}.
\begin{prooftx}{Proof of Theorem \ref{mainresult}}
    Recall that, by (\ref{bg:eq.1}), we have, for $z \in \mathbb{C}^n$,
\begin{equation*}
    \varphi_T(V^{S_1}_{K_1}(z_1), \dots, V^{S_\ell}_{K_\ell}(z_\ell)) \leq V^S_K(z),
\end{equation*}
so the goal is to prove the inverse inequality.
To do this we use Theorem \ref{bg:thm.2} and Proposition \ref{main:prop.1}.
For Theorem \ref{bg:thm.2} to apply we assume some regularity on $S$ and $V^S_K, V^{S_1}_{K_1}, \dots, V^{S_\ell}_{K_\ell}$.
These assumptions are then relaxed using regularization arguments.

To start off we assume that $S = \overline{S \cap \mathbb{Q}^n}$ and 
    $V^S_K, V^{S_1}_{K_1}, \dots, V^{S_\ell}_{K_\ell}$ are all plurisubharmonic.
The second assumption implies that $\varphi_T(V^{S_1}_{K_1}, \dots, V^{S_\ell}_{K_\ell})$ is plurisubharmonic.
Theorem \ref{bg:thm.2} implies that for $z \in \mathbb{C}^{*n}$
\begin{equation*}
    V^S_K(z)
    =
    \varlimsup_{m \rightarrow \infty}
        \sup\{\log|f(z)|^{1/m}\,; f \in \mathcal{P}^S_m(\mathbb{C}^n), f|_K \leq 1\},
\end{equation*}
which, with Proposition \ref{main:prop.1}, yields, for $z \in \mathbb{C}^{*n}$,
\begin{equation*}
    V^S_K(z)
    \leq  
    \varphi_T(V^{S_1}_{K_1}(z_1), \dots, V^{S_\ell}_{K_\ell}(z_\ell)).
\end{equation*}
By assumption $V^S_K$ and $\varphi_T(V^{S_1}_{K_1}(z_1), \dots, V^{S_\ell}_{K_\ell}(z_\ell))$ are plurisubharmonic, so
\begin{equation*}
    V^S_K(z)
    =
    \varlimsup_{\mathbb{C}^{*n} \ni w \rightarrow z}
        V^S_K(w)
    \leq
    \varlimsup_{\mathbb{C}^{*n} \ni w \rightarrow z}
        \varphi_T(V^{S_1}_{K_1}(w_1), \dots, V^{S_\ell}_{K_\ell}(w_\ell))
\end{equation*}
\begin{equation*}
    =
    \varphi_T(V^{S_1}_{K_1}(z_1), \dots, V^{S_\ell}_{K_\ell}(z_\ell)),
\end{equation*}
for $z \in \mathbb{C}^n$, since $\mathbb{C}^n \setminus \mathbb{C}^{*n}$ is pluripolar.
Equality then follows from (\ref{bg:eq.1}).

We now drop the assumptions on $S$ and instead assume that $T \cap \mathbb{R}^{*\ell} \neq \emptyset$.
Let
    $K_{j, \varepsilon} = K_j + \varepsilon \overline{\mathbb{D}}^{n_j}$,
    $K_\varepsilon = K + \varepsilon \overline{\mathbb{D}}^n = K_{1, \varepsilon} \times \dots \times K_{\ell, \varepsilon}$,
    $S_{j, k} = \operatorname{ch}\{(1/k)\Sigma_j \cup S_j\}$, for $k = 1, 2, 3, \dots$, and
\begin{equation*}
    S'_k
    =
    \bigcup_{x \in T} (x_1S_{1, k}) \times \dots \times (x_\ell S_{\ell, k})
    \subset
    \mathbb{R}^n_+.
\end{equation*}
If $x \in T \cap \mathbb{R}^{*\ell}$ then $(x_1 S_{1, k}) \times \dots \times (x_\ell S_{\ell, k}) \subset S'_k$.
Since $x_1S_{1, k}, \dots, x_\ell S_{\ell, k}$ are all convex bodies,
    $S'_k$ is also a convex body.
Consequently, $\overline{S'_k \cap \mathbb{Q}^n} = S'_k$ and
\begin{equation*}
    V^{S'_k}_{K_\varepsilon}(z)
    =
    \varphi_T(
        V^{S_{1, k}}_{K_{1, \varepsilon}}(z_1),
        \dots,
        V^{S_{\ell, k}}_{K_{\ell, \varepsilon}}(z_\ell)
    ).
\end{equation*}
So, by the continuity of $\varphi_T$, we have
\begin{equation*}
    V^S_{K_\varepsilon}(z)
    =
    \lim_{k \rightarrow \infty}
        V^{S'_k}_{K_\varepsilon}(z)
    =
    \lim_{k \rightarrow \infty}
        \varphi_T(
            V^{S_{1, k}}_{K_{1, \varepsilon}}(z_1),
            \dots,
            V^{S_{\ell, k}}_{K_{\ell, \varepsilon}}(z_\ell)
        )
\end{equation*}
\begin{equation*}
    =
    \varphi_T(
        V^{S_1}_{K_{1, \varepsilon}}(z_1),
        \dots,
        V^{S_\ell}_{K_{\ell, \varepsilon}}(z_\ell)
    ).
\end{equation*}
and
\begin{equation*}
    V^S_K(z)
    =
    \lim_{\varepsilon \rightarrow 0}
        V^S_{K_\varepsilon}(z)
    =
    \lim_{\varepsilon \rightarrow 0}
        \varphi_T(
            V^{S_1}_{K_{1, \varepsilon}}(z_1),
            \dots,
            V^{S_\ell}_{K_{\ell, \varepsilon}}(z_\ell)
        )
\end{equation*}
\begin{equation*}
    =
    \varphi_T(
        V^{S_1}_{K_1}(z_1),
        \dots,
        V^{S_\ell}_{K_\ell}(z_\ell)
    ).
\end{equation*}

Lastly we assume $T \cap \mathbb{R}^{*\ell} = \emptyset$ and $T \neq \{0\}$.
If $T = \{0\}$ then $V^S_K = 0$ and $\varphi_T = 0$ so there is nothing to prove.
By rearranging the coordinates, we can assume that $T = A \times \{0\}$,
    where $A \subset \mathbb{R}_+^k$ satisfies that $A \cap \mathbb{R}^{*k} \neq \emptyset$ and $k < \ell$.
Note that $\varphi_T(\xi) = \varphi_A(\xi_1, \dots, \xi_k)$, so
    $\varphi_S(\xi) = \varphi_{A'}(\xi_1, \dots, \xi_\nu)$, where $\nu = n_1 + \dots + n_k$ and
\begin{equation*}
    A'
    =
    \bigcup_{x \in A} (x_1 S_1) \times \dots \times (x_k S_k).
\end{equation*}
We then get, by the Liouville theorem for subharmonic functions, that functions in $\mathcal{L}^S(\mathbb{C}^n)$
    only depend on their first $\nu$ variables.
So
\begin{equation*}
    V^S_K(z)
    =
    V^{A'}_{\tilde{K}}(z)
    =
    \varphi_A(V^{S_1}_{K_1}(z_1), \dots, V^{S_k}_{K_k}(z_k))
    =
    \varphi_T(V^{S_1}_{K_1}(z_1), \dots, V^{S_\ell}_{K_\ell}(z_\ell)),
\end{equation*}
for $z \in \mathbb{C}^n$, where $\tilde{K} = K_1 \times \dots \times K_k$.
\end{prooftx}

\section{The weighted case}
We would like to prove a version of Theorem \ref{mainresult} which includes weights.
One approach is to try to find the correct representation of $q$ such that
\begin{equation*}
    V^S_{K, q}(z)
    =
    \varphi_T(V^{S_1}_{K_1, q_1}(z_1), \dots, V^{S_\ell}_{K_\ell, q_\ell}(z_\ell))
\end{equation*}
holds.
A natural first guess is to take $q = \varphi_T(q_1, \dots, q_\ell)$.
The following results will show that this choice of $q$ is not correct.
We will then show that no choice of $q$ will work in the generality of Theorem \ref{mainresult}, since 
    $\varphi_T(V^{S_1}_{K_1, q_1}(z_1), \dots, V^{S_\ell}_{K_\ell, q_\ell}(z_\ell))$ may fail to be maximal outside of $K$,
    which is a necessary condition for $V^S_{K, q}$, for any choice of an admissible weight $q$.
See \cite{MagSigSigSno:2023}, Theorem $6.1$.

\begin{proposition}
\label{weight:prop.1}
Let $n_1, \dots, n_\ell$ be natural numbers,
    $K_j \subset \mathbb{C}^{n_j}$ be compact and non-pluripolar for $j = 1, \dots, \ell$, and
    $T \subset \mathbb{R}_+^\ell$ be compact convex and containing more than one point, and
    $\{0\} \neq S_j \subset \mathbb{R}_+^{n_j}$ be compact convex and containing $0$, for $j = 1, \dots, \ell$,
        such that $V^{S_j*}_{K_j} = 0$, on $K_j$, for $j = 1, \dots, \ell$.
Then there exist admissible weights $q_1, \dots, q_\ell$ on $K_1, \dots, K_\ell$ respectively, such that, for some $z \in \mathbb{C}^n$, 
\begin{equation*}
    V^S_{K, q}(z)
    >
    \varphi_T(V^{S_1}_{K_1, q_1}(z_1), \dots, V^{S_\ell}_{K_\ell, q_\ell}(z_\ell)),
\end{equation*}
where
    $K = K_1 \times \dots \times K_\ell$,
    $q = \varphi_T(q_1, \dots, q_\ell)$, and 
    $S$ is given by equation (\ref{eq:2.7}).
Furthermore, if $T$ is a convex body then the previous statement holds for all constant weights $q_j < 0$, $j = 1, \dots, \ell$.
\end{proposition}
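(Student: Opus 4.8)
The plan is to exploit the explicit description $V^S_{K,0}=H_S$ for $K$ a product of closed polydiscs (Proposition $4.3$ in \cite{MagSigSigSno:2023}) by perturbing the zero weight by small negative constants, and then to track how the two sides of the conjectured identity move. Concretely, I would reduce to the model case $K_j=\overline{\mathbb{D}}^{n_j}$ (the general non-pluripolar compact case follows since $V^{S_j*}_{K_j}=0$ is assumed, so the functions $V^{S_j}_{K_j}$ agree with $H_{S_j}$-type behaviour near infinity and a comparison-with-polydisc argument carries over). With constant weights $q_j=-a_j<0$, one has $V^{S_j}_{K_j,q_j}=V^{S_j}_{K_j}-a_j\cdot(\text{something})$; more precisely, since adding a negative constant to the obstacle simply shifts the extremal function, $V^{S_j}_{K_j,-a_j}= \max\{V^{S_j}_{K_j}-a_j,\ ?\}$ — I would compute this shift exactly. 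For the polydisc, $V^{S_j}_{K_j,-a_j}(z_j)=\bigl(H_{S_j}(z_j)-a_j\bigr)$ on the region where $H_{S_j}>0$ in a suitable normalization, but near $K_j$ it must be cut off at $-a_j$; the key point is that on $K_j$ the function equals $-a_j$, not $0$.

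Next I would write out both sides. The right-hand side $\varphi_T\bigl(V^{S_1}_{K_1,-a_1}(z_1),\dots,V^{S_\ell}_{K_\ell,-a_\ell}(z_\ell)\bigr)$ restricted to $K$ equals $\varphi_T(-a_1,\dots,-a_\ell)$, a single negative constant; call it $-b$. For the left-hand side, $q=\varphi_T(-a_1,\dots,-a_\ell)=-b$ is that same constant on all of $K$ (constant weights compose to a constant under $\varphi_T$), so $V^S_{K,q}=V^S_{K,-b}=V^S_K-b'$ where again $b'$ is the appropriate shift; since $V^S_K=H_S$ for the polydisc, $V^S_{K,-b}$ is explicitly $\max\{H_S-b,-b\}$ in the right normalization, so on $K$ it equals $-b$ as well. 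The two sides therefore \emph{agree on $K$}, and I must find a point $z\notin K$ where they differ. The mechanism is exactly the one already exhibited in the introduction's counterexample: $H_S=\varphi_T(H_{S_1},\dots,H_{S_\ell})$ only along the "diagonal" directions where all $H_{S_j}(z_j)$ are simultaneously large, whereas for a point like $z=(\zeta^{-1},\zeta)$ (in the two-variable toy case) $H_S$ can be strictly smaller than $\varphi_T$ evaluated at the individual positive parts — and, crucially, subtracting the constants $a_j$ before applying $\varphi_T$ versus subtracting $b$ after applying $\varphi_T$ breaks the scaling so that the strict inequality, which in the unweighted case can degenerate to equality on a measure-zero set, now becomes strict on an open set, in particular in a direction where the unweighted identity $V^S_K=\varphi_T(\dots)$ \emph{does} hold. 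The point is that the cutoff levels differ: $V^S_{K,-b}$ is cut off at $-b$ everywhere, but $\varphi_T(V^{S_1}_{K_1,-a_1},\dots)$ is a $\varphi_T$-combination of functions each cut off at its own $-a_j$, and a $\varphi_T$-combination of cut-off functions is not the cut-off of the $\varphi_T$-combination unless $\varphi_T$ is additive.

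For the first (non-body) statement I would simply choose $q_j$ to be a cleverly chosen lower-semicontinuous weight rather than a constant — e.g. a weight supported on a thin slice of $K_j$ — so that $V^{S_j}_{K_j,q_j}$ picks up a pole-like feature forcing $\varphi_T(V^{S_1}_{K_1,q_1},\dots)$ to be non-maximal somewhere outside $K$, while $V^S_{K,q}$ with $q=\varphi_T(q_1,\dots,q_\ell)$ remains maximal there (being a genuine Siciak--Zakharyuta function, it is maximal on $\mathbb{C}^n\setminus K$ by Theorem $6.1$ of \cite{MagSigSigSno:2023}); since at least one side is maximal outside $K$ and they agree on $K$ but the other side is strictly larger somewhere by the comparison principle, strict inequality at some $z$ follows. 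The main obstacle I anticipate is pinning down the exact formula for the shifted extremal function $V^{S}_{K,-b}$ and $V^{S_j}_{K_j,-a_j}$ — i.e. identifying precisely the "cutoff" description $\max\{H_S-b,-b\}$ (and its analogue on each factor) and verifying it lies in the right Lelong class with the right boundary values — and then exhibiting one explicit $z$ at which the strict inequality is visible; everything else is bookkeeping with $\varphi_T$ and the monotonicity/continuity facts collected in Section \ref{sec:bg}.
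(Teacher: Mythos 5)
Your high-level intuition — that $\varphi_T$ applied to shifted functions $V^{S_j}_{K_j} - a_j$ need not equal $\varphi_T(V^{S_1}_{K_1},\dots)$ shifted by $\varphi_T(-a)$ — is the right one, but the proposal has several genuine gaps and in places misidentifies the mechanism.

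First, the claimed ``cutoff'' formula $V^{S_j}_{K_j,-a_j}=\max\{H_{S_j}-a_j,\,-a_j\}$ is false. For a constant weight $c$ on $K_j$, the definition of $V^{S_j}_{K_j,c}$ gives $V^{S_j}_{K_j,c}=V^{S_j}_{K_j}+c$ outright, since $u\in\mathcal{L}^{S_j}(\mathbb{C}^{n_j})$ if and only if $u-c\in\mathcal{L}^{S_j}(\mathbb{C}^{n_j})$ and $u|_{K_j}\le c$ if and only if $(u-c)|_{K_j}\le 0$. There is no cutoff at $-a_j$; the shifted function $V^{S_j}_{K_j}-a_j$ is already in $\mathcal{L}^{S_j}$, so nothing gets truncated. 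Consequently the ``cutoffs at different levels'' argument you propose for producing the strict inequality is not available, and the subsequent appeal to the introduction's lower-set counterexample is a different phenomenon (it concerns replacing $S$ by its lower hull, not weights). The actual mechanism is elementary once one has the correct shift formula: writing $\xi_j=V^{S_j}_{K_j}(z_j)$ and $q_j=-\eta_j$ constant, the right-hand side is $\varphi_T(\xi-\eta)$, and by Theorem \ref{mainresult} plus the constant shift the left-hand side is $\varphi_T(\xi)+\varphi_T(-\eta)$. Sublinearity of $\varphi_T$ gives $\varphi_T(\xi-\eta)\le\varphi_T(\xi)+\varphi_T(-\eta)$ always, and taking $\xi=\eta$ (which you can, since $V^{S_j*}_{K_j}=0$ on $K_j$ makes $V^{S_j}_{K_j}$ continuous on $\mathbb{C}^{*n_j}$ and hence surjective onto $\mathbb{R}_+$) the right-hand side is $\varphi_T(0)=0$ while the left-hand side is $\varphi_T(\eta)+\varphi_T(-\eta)$, the width of $T$ in direction $\eta$. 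Your proposal never arrives at this computation; it explicitly lists ``exhibiting one explicit $z$'' and ``pinning down the exact formula'' as anticipated obstacles, which is precisely the content of the proof.

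Second, your plan for the first (general $T$) statement --- abandoning constant weights in favour of lower-semicontinuous weights with ``pole-like features'' and a maximality argument --- is not needed and would be considerably harder to carry out. Constant weights already suffice for general $T$; the only adjustment is that $\varphi_T(\eta)+\varphi_T(-\eta)$ is positive for \emph{some} $\eta\in\mathbb{R}_+^\ell$ as soon as $T$ has more than one point (since $\operatorname{diam}T=\sup_{|\eta|=1}(\varphi_T(\eta)+\varphi_T(-\eta))$, and one can take a diameter-realizing segment of $T$), whereas it is positive for \emph{every} nonzero $\eta$ exactly when $T$ is a body (bounded below by $2r|\eta|$ if $T$ contains a ball of radius $r$). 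So the distinction between ``some admissible weights exist'' and ``all negative constants work'' is captured entirely by choosing the constant $\eta_j$ carefully versus freely, not by switching to non-constant weights. Finally, the reduction to $K_j=\overline{\mathbb{D}}^{n_j}$ is superfluous: the hypothesis $V^{S_j*}_{K_j}=0$ on $K_j$ is exactly what makes the argument work for general non-pluripolar compacts.
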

\begin{proof}
Recall that if $\eta \in \mathbb{R}^\ell$ with $|\eta| = 1$ then
    $\{x \in \mathbb{R}^\ell\,; \langle x, \eta \rangle = \varphi_T(\eta)\}$
    and
    $\{x \in \mathbb{R}^\ell\,; \langle x, \eta \rangle = -\varphi_T(-\eta)\}$
    are supporting hyperplanes of $S$ with outward normals $\eta$ and $-\eta$, respectively.
The distance between these parallel hyperplanes is $\varphi_T(\eta) + \varphi_T(-\eta)$,
    since $0 = \varphi_T(0) \leq \varphi_T(\eta) + \varphi_T(-\eta)$.
So $\operatorname{diam} T \geq \varphi_T(\eta) + \varphi_T(-\eta)$,
    where $\operatorname{diam}T$ denotes the diameter of $T$.
If $L \subset T$ is a line segment parallel to $\eta_L$, with $|\eta_L| = 1$,
    then $\varphi_T(\eta_L) + \varphi_T(-\eta_L) \geq \varphi_L(\eta_L) + \varphi_L(-\eta_L) = \operatorname{diam} L$.
We can take $L$ such that $\operatorname{diam} L = \operatorname{diam} T$, so
\begin{equation*}
    \operatorname{diam} T
    = 
    \sup_{|\eta| = 1} (\varphi_T(\eta) + \varphi_T(-\eta)).
\end{equation*}
Consequently, since $T$ contains more than one point, we can find $\eta \in \mathbb{R}^\ell_+$ such that $\varphi_T(\eta) + \varphi_T(-\eta) > 0$.

Since $V^{S_j*}_{K_j} = 0$ on $K_j$,
    Proposition $5.4$ in \cite{MagSigSigSno:2023} implies that $V^{S_j}_{K_j}$ is continuous on $\mathbb{C}^{*n}$ and
    consequently $V^{S_j}_{K_j}(\mathbb{C}^{n_j}) = \mathbb{R}_+$.

Now let 
    $\eta \in \mathbb{R}_+^\ell$ such that $\varphi_T(\eta) + \varphi_T(-\eta) > 0$,
    $z_j \in \mathbb{C}^{n_j}$ be such that $V^{S_j}_{K_j}(z_j) = \eta_j$, and
    $q_j(w) = -\eta_j$, for $w \in K_j$ and $j = 1, \dots, \ell$.
Then $V^{S_j}_{K_j, q_j}(z_j) = 0$ and
\begin{equation*}
    \varphi_T(V^{S_1}_{K_1, q_1}(z_1),
        \dots,
        V^{S_\ell}_{K_\ell, q_\ell}(z_\ell))
    =
    0.
\end{equation*}
But, we have by Theorem \ref{mainresult} that
\begin{equation*}
    V^S_{K, q}(z)
    =
    V^S_K(z)
    +
    \varphi_T(q_1, \dots, q_\ell)
    =
    \varphi_T(V^{S_1}_{K_1}(z_1), \dots, V^{S_\ell}_{K_\ell}(z_\ell))
    +
    \varphi_T(-\eta)
\end{equation*}
\begin{equation*}
    =
    \varphi_T(\eta)
    +
    \varphi_T(-\eta)
    >
    0.
\end{equation*}
So $V^S_{K, q}(z) > \varphi_T(V^{S_1}_{K_1, q_1}(z_1), \dots, V^{S_\ell}_{K_\ell, q_\ell}(z_\ell))$.

Now assume that $T$ is a convex body.
Then $T$ contains a Euclidean ball with radius $r$ and thus
\begin{equation*}
    \varphi_T(\eta) + \varphi_T(-\eta)
    \geq
    2r|\eta|,
\end{equation*}
for all $\eta \in \mathbb{R}^{\ell}$.
So we set $\eta_j = -q_j$ and proceed as previously in the proof.
\end{proof}

Note that all the weights in the previous theorem are negative.
Counterexamples with positive weights can, however, be inferred.
Take $T = \Sigma$.
Then $\varphi_T(\xi) = \max\{\xi_1, \dots, \xi_\ell\}$.
For $a \in \mathbb{R}$ we have that $V^S_{K, q} + a = V^S_{K, q + a}$ and
\begin{equation*}
    \varphi_T(V^{S_1}_{K_1, q_1}, \dots, V^{S_\ell}_{K_\ell, q_\ell}) + a
    =
    \varphi_T(V^{S_1}_{K_1, q_1 + a}, \dots, V^{S_\ell}_{K_\ell, q_\ell + a}).
\end{equation*}

Now we turn to the question of whether there is any way to choose $q$ to attain an equality.
Recall that,
    for compact $K \subset \mathbb{C}^n$,
    compact convex $S \subset \mathbb{R}_+^n$ such that $0 \in S$ and $a \Sigma \subset S$ for some $a > 0$, and
    admissible weight $q$ on $K$,
    Proposition \ref{bg:prop.1} states that the only function $v \in \mathcal{L}^S_+(\mathbb{C}^n)$
        that is maximal on $\mathbb{C}^n \setminus K$ and agrees
    with $V^{S*}_{K, q}$ on $K$, is $V^{S*}_{K, q}$ itself.
This enables us to show that $\varphi_S(V^{S_1}_{K_1, q_1}(z_1), \dots, V^{S_\ell}_{K_\ell, q_\ell}(z_\ell))$ is not generally maximal outside of
    $K_1 \times \dots \times K_\ell$.
\begin{proposition}
\label{weight:prop.2}
Let
    $n_1, \dots, n_\ell$ be natural numbers,
    $K_j \subset \mathbb{C}^{n_j}$ be compact and non-pluripolar for $j = 1, \dots, \ell$, and
    $T \subset \mathbb{R}_+^\ell$ be compact convex body,
    $S_j \subset \mathbb{R}_+^{n_j}$ be compact convex and containing $0$ with $a \Sigma_j \subset S_j$ for some $a$, for $j = 1, \dots, \ell$,
        such that $V^{S_j*}_{K_j} = 0$ on $K_j$, for $j = 1, \dots, \ell$, and
    $q_j < 0$ is a constant weight $K_j$, for $j = 1, \dots, \ell$.
Then
\begin{equation*}
    V(z)
    =
    \varphi_T(V^{S_1}_{K_1, q_1}(z_1), \dots, V^{S_\ell}_{K_\ell, q_\ell}(z_\ell))
    =
    \varphi_T(V^{S_1}_{K_1}(z_1) + q_1, \dots, V^{S_\ell}_{K_\ell}(z_\ell) + q_\ell)
\end{equation*}
    is not maximal on $\mathbb{C}^n \setminus K$.
\end{proposition}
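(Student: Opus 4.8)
The plan is to argue by contradiction: assuming $V$ is maximal on $\mathbb{C}^n\setminus K$, the rigidity statement Proposition~\ref{bg:prop.1} will force $V$ to coincide on all of $\mathbb{C}^n$ with $V^{S*}_{K,q}$ for a suitable constant weight $q$, and this will fail at one explicit point. Write $q=\varphi_T(q_1,\dots,q_\ell)$; it is a constant, it satisfies $q\le 0$ because each $q_j<0$ and $T\subset\mathbb{R}_+^\ell$, and it is an admissible weight on $K=K_1\times\dots\times K_\ell$. Note also that the constant-weight shift $V^{S_j}_{K_j,q_j}=V^{S_j}_{K_j}+q_j$ lets us write $V(z)=\varphi_T\bigl(V^{S_1}_{K_1}(z_1)+q_1,\dots,V^{S_\ell}_{K_\ell}(z_\ell)+q_\ell\bigr)$.

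First I would check that $V$ is eligible as the function ``$v$'' of Proposition~\ref{bg:prop.1}. It is plurisubharmonic, since $\varphi_T$ is convex and non-decreasing in each variable (as $T\subset\mathbb{R}_+^\ell$) and each $V^{S_j}_{K_j}+q_j$ is plurisubharmonic. It lies in $\mathcal{L}^S_+(\mathbb{C}^n)$: from non-pluripolarity of $K_j$ and $K_j\subset R_j\overline{\mathbb{D}}^{n_j}$ one has the two-sided estimate $H_{S_j}-c_j\le V^{S_j}_{K_j}\le H_{S_j}+c_j$ (the lower bound by scaling Proposition~$4.3$ of \cite{MagSigSigSno:2023}), and subadditivity and positive homogeneity of $\varphi_T$ turn this into $H_S-C\le V\le H_S+C$. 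Moreover $S$ contains $b\Sigma_n$ for some $b>0$, because $a\Sigma_j\subset S_j$ and $T$, being a body, contains a point with all coordinates positive, so the hypothesis on $S$ in Proposition~\ref{bg:prop.1} holds. On $K$ each $V^{S_j}_{K_j}$ vanishes (it is $0$ on $K_j$ since $V^{S_j*}_{K_j}=0$ there and $0\in\mathcal{L}^{S_j}$), hence $V=\varphi_T(q_1,\dots,q_\ell)=q$ on $K$; and, using Theorem~\ref{mainresult} together with the continuity of the $V^{S_j}_{K_j}$ to see $V^{S*}_K=0$ on $K$ (commute the upper regularization with the continuous, monotone $\varphi_T$), we get $V^{S*}_{K,q}=q$ on $K$, so $V^{S*}_{K,q}\le V\le q$ on $K$.

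With these in hand, assuming $V$ is maximal on $\mathbb{C}^n\setminus K$, Proposition~\ref{bg:prop.1} applies with $v=V$ and gives $V=V^{S*}_{K,q}$ on $\mathbb{C}^n$. To contradict this, observe that by Theorem~\ref{mainresult} and the constant-weight shift one has $V^{S*}_{K,q}\ge V^S_{K,q}=\varphi_T\bigl(V^{S_1}_{K_1},\dots,V^{S_\ell}_{K_\ell}\bigr)+\varphi_T(q_1,\dots,q_\ell)$. Now, exactly as in the proof of Proposition~\ref{weight:prop.1}, since $T$ is a body it contains a Euclidean ball of radius $r>0$, so $\varphi_T(\eta)+\varphi_T(-\eta)\ge 2r|\eta|$ for all $\eta$; take $\eta=(-q_1,\dots,-q_\ell)$, whose coordinates are all positive, and note $\varphi_T(-\eta)=q$. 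Since $V^{S_j}_{K_j}(\mathbb{C}^{n_j})=\mathbb{R}_+$ (also established there), choose $z_j^*$ with $V^{S_j}_{K_j}(z_j^*)=\eta_j$. Then $V(z^*)=\varphi_T(\eta_1+q_1,\dots,\eta_\ell+q_\ell)=\varphi_T(0)=0$, whereas $V^{S*}_{K,q}(z^*)\ge\varphi_T(\eta)+\varphi_T(-\eta)\ge 2r|\eta|>0$; this contradicts $V=V^{S*}_{K,q}$, so $V$ is not maximal on $\mathbb{C}^n\setminus K$.

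I expect the main obstacle to be the careful verification of all the hypotheses of Proposition~\ref{bg:prop.1} for $v=V$, in particular the membership $V\in\mathcal{L}^S_+(\mathbb{C}^n)$ (which needs the two-sided Lelong-class bound for each $V^{S_j}_{K_j}$, where non-pluripolarity and $a\Sigma_j\subset S_j$ enter) and the relation $V^{S*}_{K,q}\le V$ on $K$, which amounts to checking that both sides equal the constant $q$ there; the latter in turn uses $V^{S*}_K=0$ on $K$, proved by commuting the upper regularization with the continuous monotone map $\varphi_T$. The geometric inputs $\varphi_T(\eta)+\varphi_T(-\eta)>0$ and the surjectivity of $V^{S_j}_{K_j}$ onto $\mathbb{R}_+$ are already available from the proof of Proposition~\ref{weight:prop.1}, and everything else is bookkeeping.
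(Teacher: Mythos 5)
Your proof is correct and follows essentially the same strategy as the paper: verify that $V\in\mathcal{L}^S_+(\mathbb{C}^n)$ and agrees with the constant $q=\varphi_T(q_1,\dots,q_\ell)$ on $K$, then invoke Proposition~\ref{bg:prop.1} to conclude that maximality off $K$ would force $V=V^{S*}_{K,q}$, which contradicts Proposition~\ref{weight:prop.1}. The only cosmetic difference is that you unfold the contradiction explicitly (choosing $\eta=(-q_1,\dots,-q_\ell)$, using the ball $\subset T$ to get $\varphi_T(\eta)+\varphi_T(-\eta)>0$, and evaluating at a suitable $z^*$) rather than simply citing Proposition~\ref{weight:prop.1}, and you spell out the verification of the hypotheses of Proposition~\ref{bg:prop.1} in more detail than the paper does.
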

\begin{proof}
Let $K = K_1 \times \dots \times K_\ell$ and $S$ be given by (\ref{eq:2.7}).
We have that $V^{S_j}_{K_j, q_j} \in \mathcal{L}^{S_j}_+(\mathbb{C}^{n_j})$, see Proposition $4.5$ in \cite{MagSigSigSno:2023},
    so there exists a constant $c$ such that $H_{S_j} - c \leq V^{S_j}_{K_j, q_j}$.
Since $\varphi_T(\xi) \leq \varphi_T(\xi - \eta) + \varphi_T(\eta)$ holds for all $\xi, \eta \in \mathbb{R}^n$, we have
\begin{equation*}
    H_S - \varphi_T(c, \dots, c)
    \leq
    \varphi_T(H_{S_1} - c, \dots, H_{S_\ell} - c)
    \leq
    V.
\end{equation*}
So $V|_K = \varphi_T(q_1, \dots, q_\ell)$ and $V \in \mathcal{L}_+^S(\mathbb{C}^n)$.
If $V$ were maximal outside of $K$ then, by Proposition \ref{bg:prop.1}, we would have $V = V^S_{K, q}$ where $q = \varphi_T(q_1, \dots, q_\ell)$,
    contradicting Proposition \ref{weight:prop.1}.
So $V$ is not maximal outside of $K$.
\end{proof}

\section{Convexity of sublevel sets}
Theorem $1.3$ in \cite{NguTan:2021} states that for
    $t > 0$,
    convex body $0 \in S \subset \mathbb{R}^n_+$, and
    compact convex $K \subset \mathbb{C}^n$,
    the sublevel set $\{z \in \mathbb{C}^n \,; V^S_K(z) < t\}$ is convex.
This can not hold in the stated generality, as discussed in Example $9.3$ of \cite{MagSigSigSno:2023}.
The error in the proof in \cite{NguTan:2021} is in the first equality on page $516$,
    where $\max\{a, b\} + \max\{c, d\} = \max\{a + c, b + d\}$ is used.
This identity does not hold generally.
To describe for which sets $S$ the result can not hold we define, for $x \in \mathbb{R}^{*n}_+$, the \emph{simplex given by $x$} as
\begin{equation*}
    \Sigma_x
    =
    \operatorname{ch}\{0, x_1 e_1, \dots, x_n e_n\}
    =
    \{\xi \in \mathbb{R}^n_+ \,; \xi_1/x_1 + \dots + \xi_n/x_n \leq 1\},
\end{equation*}
where $e_1, \dots, e_n$ is the standard basis for $\mathbb{R}^n$.
\begin{proposition}
    Let $S \neq \{0\}$ be a compact convex subset of $\mathbb{R}^n_+$ containing $0$ that is not a simplex.
    Then there exists $t_0 > 0$ such that the sublevel set
\begin{equation*}
    \{z \in \mathbb{C}^n \,; H_S(z) \leq t\}
\end{equation*}
is not convex for all $t > t_0$. 
\end{proposition}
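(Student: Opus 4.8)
The plan is to prove non-convexity by exhibiting, for every sufficiently large $t$, two points $r^{(0)},r^{(1)}$ with strictly positive real coordinates that lie in $\{H_S\le t\}$ while their midpoint does not; since $\{H_S\le t\}$ would then intersect the convex set $\{z\in\mathbb C^n : z_1,\dots,z_n\in\mathbb R_{\ge0}\}$ in a convex set, this suffices. On such points $r$ one has $r\in\mathbb C^{*n}$, so $H_S(r)=\varphi_S(\operatorname{Log} r)$ with no regularisation involved, and because $S\subset\mathbb R^n_+$ contains $0$ the support function $\varphi_S$ is nonnegative, monotone increasing in each variable, subadditive, positively homogeneous, and satisfies $\varphi_S(-e_j)=0$ for every $j$; moreover $\sigma_S:=\varphi_S(1,\dots,1)>0$ since $S\neq\{0\}$.

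The main construction starts from the assumption that $\varphi_S$ does not respect componentwise maxima, i.e.\ that there are $\xi,\eta\in\mathbb R^n$ with $\varphi_S(\xi\vee\eta)>\varphi_S(\xi)\vee\varphi_S(\eta)$, where $\xi\vee\eta=(\max\{\xi_1,\eta_1\},\dots,\max\{\xi_n,\eta_n\})$. Positive homogeneity lets me rescale to $u^{(0)},u^{(1)}$ with $\varphi_S(u^{(0)}),\varphi_S(u^{(1)})\le1$ and $\varphi_S(u^{(0)}\vee u^{(1)})=1+\delta$ for some $\delta>0$. Set $r^{(i)}_j=e^{tu^{(i)}_j}$, so that $H_S(r^{(i)})=t\varphi_S(u^{(i)})\le t$ and $r^{(i)}\in\{H_S\le t\}$. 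For $\bar r=\tfrac12(r^{(0)}+r^{(1)})$ one has $\bar r_j\ge\tfrac12 e^{t(u^{(0)}\vee u^{(1)})_j}$, hence $\operatorname{Log}\bar r\ge t(u^{(0)}\vee u^{(1)})-(\log2)(1,\dots,1)$ componentwise; monotonicity of $\varphi_S$ together with $\varphi_S(a)\le\varphi_S(a-b)+\varphi_S(b)$ yields $H_S(\bar r)=\varphi_S(\operatorname{Log}\bar r)\ge t\,\varphi_S(u^{(0)}\vee u^{(1)})-(\log2)\sigma_S=t(1+\delta)-(\log2)\sigma_S$, which is strictly larger than $t$ whenever $t>t_0:=(\log2)\sigma_S/\delta$. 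Thus $\{H_S\le t\}$ is not convex for all $t>t_0$.

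It remains to see that every $S$ as in the statement gives rise to such a pair $\xi,\eta$. Monotonicity always forces $\varphi_S(\xi\vee\eta)\ge\varphi_S(\xi)\vee\varphi_S(\eta)$, so the only alternative is that $\varphi_S$ preserves all binary, hence all finite, componentwise maxima. Assuming this, one writes $\xi^+=\bigvee_{j=1}^n\xi_j e_j$ and, using $\varphi_S(\xi)\ge0$, $\varphi_S(e_j)=:x_j\ge0$ and $\varphi_S(-e_j)=0$, computes $\varphi_S(\xi)=\varphi_S(\xi^+)=\max_j x_j\xi_j^+=\max\{0,x_1\xi_1,\dots,x_n\xi_n\}$; hence $\varphi_S=\varphi_{\Sigma_x}$ and $S=\Sigma_x$ with $x=(\varphi_S(e_1),\dots,\varphi_S(e_n))$, since supporting functions determine compact convex sets. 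If all $x_j>0$ this is a genuine simplex, contradicting the hypothesis; if some $x_j=0$ then $S$ lies in the coordinate hyperplane $\{s_j=0\}$, so $H_S$ is independent of $z_j$ and $\{H_S\le t\}$ is the product of a copy of $\mathbb C$ with the corresponding sublevel set of $H_{S'}$, $S'$ being the projection of $S$ onto the spanned coordinate subspace, so the claim follows by induction on $n$ applied to $S'$ (the base case $n=1$ is vacuous, every admissible $S$ there being a segment $[0,x_1e_1]=\Sigma_{x_1}$).

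The step I expect to require the most care is the characterisation just used — that a nonnegative, monotone, subadditive, positively homogeneous $\varphi_S$ with $\varphi_S(-e_j)=0$ preserving binary maxima is exactly of the form $\max\{0,x_1\xi_1,\dots,x_n\xi_n\}$ — and, intertwined with it, the bookkeeping for the degenerate situation: when $S$ is a lower-dimensional $\Sigma_x$ with some $x_j=0$ the sublevel set is in fact convex, so for the statement to hold such sets must be counted among the excluded ``simplices'' (equivalently, the hypothesis can be phrased as: $\varphi_S$ is not of the form $\max\{0,x_1\xi_1,\dots,x_n\xi_n\}$ for any $x\in\mathbb R^n_{\ge0}$, and for $S$ not contained in a coordinate hyperplane this coincides with ``not a simplex''). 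Once the maximum-violating directions $\xi,\eta$ are available, the reduction to real points and the midpoint estimate producing $t_0$ are routine.
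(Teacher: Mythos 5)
Your proof is correct, and it takes a genuinely different route from the paper's. The paper's argument splits into two cases. If $S$ contains a neighborhood of $0$ in $\mathbb{R}^n_+$, it defines $x_j=\max\{t: te_j\in S\}>0$, picks $s\in S\setminus\Sigma_x$, and tests convexity along the real-positive points $a^{1/x_j}e_j$ against their arithmetic mean $(a^{1/x_1}/n,\dots,a^{1/x_n}/n)$, producing an explicit $t_0$ in terms of $s$, $x$ and $n$. If $S$ does not contain such a neighborhood, it reduces as you do (projecting away coordinates on which $\varphi_S$ is degenerate) and otherwise uses the two points $(\tau,0,\dots,0)$ and $(0,1,\dots,1)$, both on the zero level set, whose midpoint escapes every sublevel set. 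Your approach instead isolates the structural obstruction: you observe that $\varphi_S$ preserving componentwise maxima, together with positivity, monotonicity, $\varphi_S(-e_j)=0$, and homogeneity, forces $\varphi_S(\xi)=\max\{0,x_1\xi_1,\dots,x_n\xi_n\}$ with $x_j=\varphi_S(e_j)$, hence $S=\Sigma_x$; any max-violating pair $\xi,\eta$ is then fed through $r^{(i)}_j=e^{tu^{(i)}_j}$ and a one-line subadditivity estimate to give $t_0=(\log 2)\sigma_S/\delta$. This is more unified (a single construction once the max-violation is found) and makes transparent exactly which $S$ have convex sublevel sets, at the cost of a slightly less concrete $t_0$ and of having to do the lattice bookkeeping in the characterization step.

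You are also right to flag the interpretive subtlety that both proofs share: if $S$ lies in a coordinate hyperplane and projects to a nondegenerate simplex, then $\{H_S\le t\}$ is a cylinder over a polydisc and \emph{is} convex, even though $S$ is not of the form $\Sigma_x$ with $x\in\mathbb{R}^{*n}_+$. The paper's reduction ``to a lower dimension'' quietly assumes such degenerate simplices are excluded from the hypothesis, and your rephrasing --- that ``not a simplex'' should mean $\varphi_S$ is not of the form $\max\{0,x_1\xi_1,\dots,x_n\xi_n\}$ for any $x\in\mathbb{R}^n_{\ge 0}$ --- is the precise statement under which both arguments close.
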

\begin{proof}
Assume first that $S$ contains a neighborhood of $0$ in $\mathbb{R}^n_+$ and
    define $x \in \mathbb{R}^n_+$ by $x_j = \max\{t \in \mathbb{R} \,; te_j \in S\} > 0$.
By assumption there is an $s \in S$ such that $s \not \in \Sigma_x$, and consequently $s_1/x_1 + \dots + s_n/x_n > 1$.
For $a > 1$ we have by Proposition $3.3$ in \cite{MagSigSigSno:2023} that $H_S(a^{1/x_j}e_j) = \log a$, for $j = 1, \dots, n$, and
\begin{equation*}
    H_S(a^{1/x_1}/n, \dots, a^{1/x_n}/n)
    \geq
    s_1\log (a^{1/x_1}/n) + \dots + s_n\log (a^{1/x_n}/n)
\end{equation*}
\begin{equation*}
    =
    (s_1/x_1 + \dots + s_n/x_n) \log a - (s_1 + \dots + s_n) \log n.
\end{equation*}
Setting
\begin{equation*}
    t_0 = \frac{(s_1 + \dots + s_n)\log n}{s_1/x_1 + \dots + s_n/x_n - 1} > 0,
\end{equation*}
$t > t_0$, and $a = e^t$, we have that $\log a > t_0$ and consequently
\begin{equation*}
    H_S(a^{1/x_1}/n, \dots, a^{1/x_n}/n)
    >
    \log a
    =
    H_S(a^{1/x_j}e_j)
\end{equation*}
for $j = 1, \dots, n$.
Since $t = \log a$ we have that $a^{1/x_1}e_j, \dots, a^{1/x_n}e_n$ are all in the sublevel set $\{z \in \mathbb{C}^n \,; H_S(z) \leq t\}$,
    but their average is not.
So the sublevel set is not convex.

Now assume $S$ does not contain a neighborhood of $0$.
Then, by possibly rearranging the variables, we may assume that $H_S(\zeta, 0, \dots, 0) = 0$ for $\zeta \in \mathbb{C}$.
Since $S \neq \{0\}$ we may assume that there is an $s \in S$ such that $s_1 > 0$,
    since otherwise we could write $S = \{0\} \times T$ reducing the problem to a lower dimension.
Let us now fix $t > 0$ and note that
\begin{equation*}
    H_S(\zeta/2, 1/2, \dots, 1/2)
    \geq
    s_1\log |\zeta| - (s_1 + s_2 + \dots + s_n) \log 2,
\end{equation*}
    for $\zeta \in \mathbb{C}$, so we can choose $\tau \in \mathbb{C}$ such that $H_S(\tau/2, 1/2, \dots, 1/2) > t$.
We have that $H_S(\tau, 0, \dots, 0) = H_S(0, 1, \dots, 1) = 0$,
    so both $(\tau, 0, \dots, 0)$ and $H_S(0, 1, \dots, 1)$ are in the sublevel set $\{z \in \mathbb{C}^n \,; H_S(z) \leq t\}$ but their average is not.
So the sublevel set is not convex.
\end{proof}

{\small 
\bibliographystyle{siam}
\bibliography{rs_bibref}

\smallskip\noindent
Science Institute\\
University of Iceland\\
IS-107 Reykjav\'ik\\
ICELAND 

\smallskip\noindent
bergur@hi.is.
}

\end{document}